\documentclass[11pt]{article}

\usepackage[margin=1.12in]{geometry}
\usepackage{amsmath,amssymb,amsthm,mathtools}
\usepackage{microtype}
\usepackage{enumitem}
\usepackage{mathrsfs}
\usepackage[hidelinks]{hyperref}
\usepackage[nameinlink,capitalize]{cleveref}

\newtheorem{theorem}{Theorem}[section]
\newtheorem{proposition}[theorem]{Proposition}
\newtheorem{corollary}[theorem]{Corollary}
\newtheorem{lemma}[theorem]{Lemma}
\theoremstyle{remark}
\newtheorem{remark}[theorem]{Remark}
\newtheorem{question}[theorem]{Question}

\newcommand{\T}{\mathbb T}
\newcommand{\E}{\mathbb E}
\newcommand{\Pp}{\mathbb P}

\newcommand{\C}{\mathbb C}
\newcommand{\e}{\mathrm e}
\newcommand{\dd}{\,\mathrm d}
\newcommand{\Mellin}{\mathscr M}
\newcommand{\Mcal}{\mathcal M}

\title{\textbf{How Random Is the M\"obius Function?\\
Smoothing, Probability, and the Riemann Hypothesis}}
\author{Alberto Verjovsky}
\date{}

\begin{document}
\maketitle

\begin{abstract}
This article is primarily expository, but it also contains several new
observations and reformulations concerning the M\"obius function,
probabilistic models, dynamical systems, and the Riemann hypothesis.  Its
starting question is classical: in what sense can the M\"obius function
be said to behave randomly?  We begin with Denjoy's random-walk heuristic
and place it in the context of later work on random multiplicative
functions, short intervals, and M\"obius pseudorandomness.  We then
develop two smoothing forms of the classical Mertens criterion for the
Riemann hypothesis.  The first uses the discrete Laplace transform
\[
\Phi(t)=\sum_{n\geq1}\mu(n)e^{-nt}
\]
and identifies RH with the condition
\[
\Phi\in L^p(0,\infty)
\qquad\text{for every }1\leq p<2.
\]
The second uses normalized M\"obius Fourier polynomials and local
moments on arcs of length comparable with \(1/N\).  The paper also
revisits the author's earlier criterion for RH in terms of discrete
measures, as presented in Broughan's account of analytic equivalents of
RH.  Denjoy's heuristic is made precise in a simple independent
coefficient model, and this model is carefully distinguished from the
modern theory of random multiplicative functions.  The resulting
coefficient space also gives a measure--category contrast: the relevant
\(L^p\)-property has full measure but is topologically meagre.  Since
the M\"obius sequence itself is one distinguished deterministic point of this coefficient
space, and the property is equivalent to RH exactly at that point,
this contrast gives a precise sense in which the property required by RH is
measure-theoretically typical: it is the rigorous content behind Denjoy's
often-quoted, and necessarily heuristic, remark that the Riemann hypothesis
holds ``with probability one.''  A further
point of the paper is dynamical: the multiplicative semigroup of positive
integers acts naturally on the coefficient space, and the M\"obius
sequence is a distinguished arithmetic point for this action.  The aim
throughout is explanatory, while keeping these new observations visible:
to show how arithmetic, probability, Fourier analysis, Mellin transforms,
and dynamics fit together, and to indicate what modern results add to
the older random-walk picture.
\end{abstract}

\medskip
\noindent\textbf{Keywords.}
Riemann hypothesis, M\"obius function, Mertens function, M\"obius
pseudorandomness, Sarnak conjecture, discrete Laplace transform,
Fourier polynomials, random series, ergodic semigroup actions.

\medskip
\noindent\textbf{2020 Mathematics Subject Classification.}
Primary 11M26; Secondary 11N37, 42A05, 44A10.

\section{Introduction}

The M\"obius function
\[
\mu(n)=
\begin{cases}
0,&\text{if \(n\) is divisible by the square of a prime},\\
(-1)^k,&\text{if \(n\) is a product of \(k\) distinct primes},
\end{cases}
\]
is one of the simplest arithmetic functions to define and one of the
most difficult to understand globally.  Its summatory function
\[
M(x)=\sum_{n\leq x}\mu(n)
\]
measures the cumulative cancellation among its values.  A classical
form of the Mertens criterion states that
\begin{equation}\label{eq:mertens-rh}
\mathrm{RH}
\quad\Longleftrightarrow\quad
M(x)=O_\varepsilon\bigl(x^{1/2+\varepsilon}\bigr)
\qquad\text{for every }\varepsilon>0.
\end{equation}
See, for example, \cite[Ch.~XIV]{Titchmarsh},
\cite[Ch.~6]{MontgomeryVaughan}, and \cite{Tenenbaum}.

The exponent \(1/2\) suggests a probabilistic analogy.  A sum of
independent centered signs typically has size comparable with the
square root of the number of terms, and this comparison is old.
Denjoy proposed treating the nonzero values of the M\"obius function
heuristically as independent random signs, a perspective discussed by
Edwards in his account of the Riemann hypothesis
\cite[pp.~263--266]{Edwards} but not always attributed explicitly in
later accounts.  Denjoy's viewpoint long predates the modern
terminology of pseudorandomness, yet it already isolates the central
question: to what extent does the deterministic sequence
\(\mu(1),\mu(2),\ldots\) display the cancellation expected from
randomness?  Denjoy's heuristic is often summarized by the striking,
and necessarily informal, remark that the Riemann hypothesis therefore
holds ``with probability one.''  Sections~\ref{sec:probabilistic-form}
and~\ref{sec:measure-category} make this remark mathematically
precise: they show that, relative to Haar measure on the coefficient
space \(\{-1,0,1\}^{\mathbb N}\), the M\"obius sequence is one distinguished deterministic point in the
coefficient space, while the property under consideration holds on a set
of full Haar measure and is at the same time exceptional in the sense of
Baire category.  Thus ``probability one'' and ``topologically generic''
point in opposite directions.

A modern and influential formulation of the same broad philosophy is
Sarnak's M\"obius-disjointness conjecture \cite{Sarnak}.  It predicts
that the M\"obius function is asymptotically orthogonal to every
sequence generated by a zero-entropy topological dynamical system.
Sarnak's conjecture is logically distinct from RH, but both belong to
the study of M\"obius pseudorandomness: RH concerns the cancellation of
the partial sums \(M(x)\), whereas Sarnak's conjecture concerns
correlations with deterministic sequences of low dynamical complexity.
RH belongs to a much larger family of equivalent analytic
reformulations, surveyed comprehensively in Broughan's three-volume
\emph{Equivalents of the Riemann Hypothesis}
\cite{BroughanI,BroughanII,BroughanIII}; one of these equivalents
supplies a more personal starting point for the present article.  In an
earlier paper \cite{VerjovskyDiscrete} I considered a family of
discrete measures built from Euler's totient function and proved an
equivalent form of RH in terms of their rate of convergence.  Broughan
gives a detailed exposition of this result in Chapter~10 of Volume~II,
where he calls it the ``Verjovsky criterion'' \cite[Ch.~10]{BroughanII};
the proof there, as in the present paper, is governed by Mellin
transforms and by the passage between an asymptotic estimate and a
zero-free region for \(\zeta(s)\).  Returning to this older argument
was one of the reasons for looking again at RH through a smoothed
arithmetic object.

The present article approaches the Mertens criterion through two
smoothing procedures.  The first is exponential smoothing:
\begin{equation}\label{eq:Phi-def}
\Phi(t)=\sum_{n=1}^{\infty}\mu(n)e^{-nt},
\qquad t>0.
\end{equation}
Its Mellin transform is \(\Gamma(s)/\zeta(s)\).  Consequently, the
behavior of \(\Phi(t)\) near \(t=0\) encodes zero-free half-planes for
the zeta function and leads naturally to an \(L^p\)-criterion with
critical exponent \(2\).

The second procedure is Fourier-theoretic.  Define
\begin{equation}\label{eq:PN-merged}
S_N(t)=\sum_{n\leq N}\mu(n)e^{2\pi int},
\qquad
P_N(t)=\frac{S_N(t)}{\sqrt N}.
\end{equation}
For \(c>0\) and \(1\leq q<\infty\), consider the local moment
\begin{equation}\label{eq:localmoment}
\mathcal M_{q,c}(N)
=
\left(
\frac{N}{2c}
\int_{-c/N}^{c/N}|P_N(t)|^q\,dt
\right)^{1/q}.
\end{equation}
Since
\[
P_N(0)=\frac{M(N)}{\sqrt N},
\]
high moments on an arc of scale \(1/N\) detect the same square-root
cancellation that appears in \eqref{eq:mertens-rh}.

The random-sign comparison can be made exact.  Replacing the values
\(\mu(n)\) with independent variables uniformly distributed on
\(\{-1,0,1\}\), one obtains a random Laplace transform that belongs
almost surely to \(L^{2-\varepsilon}(0,\infty)\) for every
\(\varepsilon>0\).  The associated coefficient space
\(\{-1,0,1\}^{\mathbb N}\) carries a natural multiplicative semigroup
action.  Its nontrivial generators are measure preserving, exact,
mixing, and ergodic.  The same model also reveals a striking
measure--category dichotomy: the \(L^{2-\varepsilon}\) property has
full Haar measure but is topologically meagre.

This same coefficient space carries a further, dynamical structure:
the natural action
\[
(\sigma_\ell\omega)(n)=\omega(\ell n),
\qquad \ell\in\mathbb N_+.
\]
The M\"obius function determines a distinguished arithmetic point whose
orbit retains the multiplicative structure of \(\mu\).  In this
language the Laplace criterion for RH becomes a membership question for
the M\"obius point in a natural measurable subset of the coefficient
space.  This gives a connection between RH and dynamical systems that is
different from, but complementary to, Sarnak's M\"obius-disjointness
viewpoint.  It is also reminiscent of Furstenberg's dynamical approach
to number theory: arithmetic multiplication is represented by a family
of commuting transformations, and an arithmetic question is recast in
terms of the resulting dynamical system.  The analogy is one of
viewpoint rather than an application of a particular Furstenberg theorem
or of the correspondence principle; see
\cite{Furstenberg1967,Furstenberg1981}.

The paper is primarily expository, organized around the two exact
smoothing criteria, but the multiplicative semigroup action, the
description of the M\"obius orbit, and the accompanying
measure--category phenomenon appear to be new, or at least are not
commonly stated in this form; no claim is made that these observations
furnish a new method for proving RH.  \Cref{sec:modern-randomness}
places them beside developments that came much later than Denjoy's
heuristic --- random multiplicative functions, short-interval
cancellation, averaged forms of Chowla's conjecture, and recent
limiting-distribution results --- and complete proofs of the elementary
reformulations are included throughout so that the article may be read
independently.

\section{Exponential smoothing and the Mellin transform}

The starting point is the Mellin transform of the discrete Laplace sum.

\begin{proposition}\label{prop:Mellin}
For \(\Re s>1\),
\begin{equation}\label{eq:Mellin-identity}
 \Mellin\Phi(s)
 :=
 \int_0^\infty \Phi(t)t^{s-1}\,\dd t
 =
 \frac{\Gamma(s)}{\zeta(s)}.
\end{equation}
\end{proposition}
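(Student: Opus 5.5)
The plan is to integrate the series defining \(\Phi\) term by term against \(t^{s-1}\), justified by Fubini--Tonelli, and then identify the resulting Dirichlet series. Fix \(s\) with \(\sigma=\Re s>1\). For each \(n\geq1\) the substitution \(u=nt\) gives
\[
\int_0^\infty e^{-nt}t^{s-1}\dd t = n^{-s}\Gamma(s),
\]
valid because \(\sigma>0\).

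Next I check absolute integrability of the double sum-integral. Bounding \(|\mu(n)|\leq 1\) and \(|t^{s-1}|=t^{\sigma-1}\), we get
\[
\sum_{n\geq1}\int_0^\infty |\mu(n)|e^{-nt}t^{\sigma-1}\dd t
\leq \Gamma(\sigma)\sum_{n\geq1} n^{-\sigma}=\Gamma(\sigma)\zeta(\sigma)<\infty
\]
since \(\sigma>1\). By Tonelli the function \(\sum_n|\mu(n)|e^{-nt}t^{\sigma-1}\) is integrable on \((0,\infty)\). Two consequences follow:
\begin{itemize}[leftmargin=2em]
\item the Mellin integral \(\int_0^\infty\Phi(t)t^{s-1}\dd t\) converges absolutely, so \(\Mellin\Phi(s)\) is well defined;
\item Fubini permits interchanging sum and integral.
\end{itemize}
This gives
\[
\Mellin\Phi(s)=\Gamma(s)\sum_{n\geq1}\mu(n)n^{-s}.
\]

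Finally, I invoke the classical identity \(\sum_{n\geq1}\mu(n)n^{-s}=1/\zeta(s)\) for \(\Re s>1\). It follows from the Euler product: expand \(\prod_p(1-p^{-s})\) as an absolutely convergent product, or equivalently take the Dirichlet convolution \(\mu*1=\delta\) and multiply absolutely convergent series. Since \(\zeta(s)\neq0\) there, this gives \eqref{eq:Mellin-identity}.

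The only step requiring care is the Fubini justification near \(t=0\), where \(\Phi(t)\) itself is not bounded by anything summable uniformly. That is exactly why I bound by \(\sum e^{-nt}=1/(e^t-1)\sim 1/t\) integrated against \(t^{\sigma-1}\). This is integrable at \(0\) precisely when \(\sigma>1\), and at \(\infty\) because of exponential decay. I expect no genuine obstacle beyond this.
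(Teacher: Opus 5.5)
Your proposal is correct and matches the paper's proof: both use Tonelli with the majorant \(\Gamma(\sigma)\sum_n n^{-\sigma}<\infty\) to interchange sum and integral, evaluate each term as \(\Gamma(s)n^{-s}\), and identify \(\sum_n\mu(n)n^{-s}\) with \(1/\zeta(s)\) for \(\Re s>1\). Your extra remarks are correct and fill in points the paper leaves implicit: the majorant \(t^{\sigma-1}/(e^t-1)\) is integrable at \(0\) exactly when \(\sigma>1\), and the Dirichlet-series identity follows from the Euler product.
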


\begin{proof}
When \(\sigma=\Re s>1\), Tonelli's theorem and
\(|\mu(n)|\leq1\) give
\[
 \sum_{n=1}^{\infty}|\mu(n)|
 \int_0^\infty \e^{-nt}t^{\sigma-1}\,\dd t
 \leq
 \Gamma(\sigma)\sum_{n=1}^{\infty}\frac1{n^\sigma}
 <\infty.
\]
Hence the sum and integral may be interchanged:
\[
 \begin{aligned}
 \int_0^\infty \Phi(t)t^{s-1}\,\dd t
 &=
 \sum_{n=1}^{\infty}\mu(n)
 \int_0^\infty \e^{-nt}t^{s-1}\,\dd t\\
 &=
 \Gamma(s)\sum_{n=1}^{\infty}\frac{\mu(n)}{n^s}\\
 &=
 \frac{\Gamma(s)}{\zeta(s)}.
 \end{aligned}
\]
\end{proof}

A single \(L^p\) bound already yields a zero-free half-plane.

\begin{proposition}\label{prop:Lp-zero-free}
Let \(1<p<\infty\).  If
\[
 \Phi\in L^p(0,\infty),
\]
then \(\zeta(s)\neq0\) throughout the half-plane
\[
 \Re s>\frac1p.
\]
\end{proposition}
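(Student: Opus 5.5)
The plan is to show that the Mellin integral \(\int_0^\infty \Phi(t)t^{s-1}\,\dd t\) converges absolutely and defines a holomorphic function on the strip \(1/p<\Re s<1+\text{anything}\), so that by \cref{prop:Mellin} and analytic continuation it coincides with \(\Gamma(s)/\zeta(s)\) there. Zeros of \(\zeta\) in \(\Re s>1/p\) would then be poles of \(\Gamma(s)/\zeta(s)\), since \(\Gamma\) has no zeros. A holomorphic function has no poles, so this gives the conclusion. Zeros with \(\Re s\geq 1\) are classical anyway, so the real content is \(1/p<\Re s<1\).

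First I split the integral at \(t=1\). On \([1,\infty)\) I use the trivial bound \(|\Phi(t)|\leq \sum_n e^{-nt}\leq C e^{-t}\), which makes \(\int_1^\infty|\Phi(t)|t^{\sigma-1}\,\dd t\) finite for every real \(\sigma\). This piece is entire in \(s\), by the standard theorem on holomorphic dependence of parameter integrals (Morera plus Fubini).

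On \((0,1)\) I use Hölder's inequality with conjugate exponent \(p'=p/(p-1)\):
\[
\int_0^1|\Phi(t)|t^{\sigma-1}\,\dd t\leq \|\Phi\|_{L^p}\Bigl(\int_0^1 t^{(\sigma-1)p'}\,\dd t\Bigr)^{1/p'}.
\]
The last integral is finite exactly when \((\sigma-1)p'>-1\). Since \(1/p'=1-1/p\), this condition is \(\sigma>1-1/p'=1/p\). The bound is locally uniform on compact subsets of \(\Re s>1/p\). Hence \(\int_0^1\Phi(t)t^{s-1}\,\dd t\) is holomorphic on \(\Re s>1/p\), again via Morera and Fubini, which the absolute convergence justifies. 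Therefore \(F(s)=\int_0^\infty\Phi(t)t^{s-1}\,\dd t\) is holomorphic on \(\Re s>1/p\), and it agrees with \(\Gamma(s)/\zeta(s)\) on \(\Re s>1\) by \cref{prop:Mellin}.

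The main point needing care is the identification step. \(\Gamma(s)/\zeta(s)\) is meromorphic on \(\C\), and \(F\) is holomorphic on the connected half-plane \(\Re s>1/p\). The two agree on an open set, so by the identity theorem they agree on this half-plane away from the poles of \(\Gamma/\zeta\). Suppose \(\zeta(\rho)=0\) with \(\Re\rho>1/p\). Because \(\rho\neq0\) and \(\Gamma(\rho)\neq0\), the function \(\Gamma/\zeta\) has a genuine pole at \(\rho\), so \(|\Gamma(s)/\zeta(s)|\to\infty\) as \(s\to\rho\). But \(F\) is bounded near \(\rho\), which is a contradiction. (The pole of \(\zeta\) at \(s=1\) only produces a zero of \(\Gamma/\zeta\), so it causes no trouble.) This proves the proposition.
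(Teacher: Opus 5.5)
Your proof is correct and follows essentially the same route as the paper. You split the Mellin integral at \(t=1\), use the trivial exponential bound at infinity and H\"older's inequality with the conjugate exponent near zero to get holomorphy on \(\Re s>1/p\) (via Morera and Fubini), then identify the result with \(\Gamma(s)/\zeta(s)\) and derive a contradiction from the pole that a zero of \(\zeta\) would create, exactly as in the paper's footnoted argument.
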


\begin{proof}
Let \(p'=p/(p-1)\).  At infinity we have the elementary estimate
\[
 |\Phi(t)|
 \leq\sum_{n=1}^{\infty}\e^{-nt}
 =\frac{\e^{-t}}{1-\e^{-t}},
 \qquad t\geq1,
\]
so the Mellin integral converges absolutely there for every \(s\in\C\).

Near zero, H\"older's inequality gives
\[
 \int_0^1|\Phi(t)|t^{\sigma-1}\,\dd t
 \leq
 \|\Phi\|_{L^p(0,1)}
 \left(
 \int_0^1 t^{(\sigma-1)p'}\,\dd t
 \right)^{1/p'}.
\]
The integral on the right is finite precisely when
\[
 (\sigma-1)p'>-1,
\]
or equivalently \(\sigma>1/p\).  Standard dominated-convergence
arguments show that \(\Mellin\Phi(s)\) is holomorphic in
\(\Re s>1/p\).\footnote{In more detail: the H\"older bound above is
locally uniform in \(\sigma\) on compact subsets of
\(\{\Re s>1/p\}\), so \(s\mapsto\int_0^1\Phi(t)t^{s-1}\,\dd t\) is a
locally uniform limit, as \(N\to\infty\), of the entire functions
\(s\mapsto\int_{1/N}^1\Phi(t)t^{s-1}\,\dd t\); by Morera's theorem
(applied to any closed triangle in \(\Re s>1/p\), together with
Fubini's theorem to justify interchanging the contour integral with
the \(t\)-integral) the limit function is itself holomorphic there.
Combined with the everywhere-convergent tail integral
\(\int_1^\infty\Phi(t)t^{s-1}\,\dd t\), this gives holomorphy of
\(\Mellin\Phi(s)\) on \(\Re s>1/p\).}

By \cref{prop:Mellin}, this holomorphic function agrees with
\(\Gamma(s)/\zeta(s)\) in the nonempty overlap \(\Re s>1\).
It therefore gives a holomorphic continuation of that quotient to
\(\Re s>1/p\).  Since \(\Gamma\) has no zeros, \(\zeta\) can have no
zero in this half-plane.\footnote{Explicitly: if \(\zeta(s_0)=0\) for
some \(s_0\) with \(\Re s_0>1/p\), then, since \(\Gamma(s_0)\neq0\),
the meromorphic function \(\Gamma(s)/\zeta(s)\) has a pole at
\(s_0\). But \(\Mellin\Phi\) is holomorphic, hence finite, at
\(s_0\), and the identity \(\Mellin\Phi(s)=\Gamma(s)/\zeta(s)\) holds
throughout the connected open set \(\{\Re s>1/p\}\setminus
\zeta^{-1}(0)\); letting \(s\to s_0\) along this set forces
\(\Mellin\Phi(s_0)=\infty\), a contradiction.}
\end{proof}

\section{The Laplace \texorpdfstring{\(L^p\)}{Lp} criterion}
\begin{theorem}\label{thm:laplace}
For \(\Phi\) defined by \eqref{eq:Phi-def}, the following are
equivalent:
\begin{enumerate}[label=\textup{(\roman*)}]
\item the Riemann hypothesis holds;
\item \(\Phi\in L^p(0,\infty)\) for every \(1\leq p<2\);
\item \(\Phi\in L^{2-\varepsilon}(0,\infty)\) for every
      \(0<\varepsilon\leq1\).
\end{enumerate}
\end{theorem}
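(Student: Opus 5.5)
The plan is to prove the cycle (i)\(\Rightarrow\)(ii)\(\Rightarrow\)(iii)\(\Rightarrow\)(i). The step (ii)\(\Rightarrow\)(iii) is trivial, since \(2-\varepsilon\in[1,2)\) for \(0<\varepsilon\leq1\). For (iii)\(\Rightarrow\)(i), \cref{prop:Lp-zero-free} applies with \(p=2-\varepsilon\in(1,2)\) and shows that \(\zeta\) has no zeros in \(\Re s>1/(2-\varepsilon)\). Letting \(\varepsilon\to0\), \(\zeta(s)\neq0\) for \(\Re s>1/2\), which is RH. So all the work is in (i)\(\Rightarrow\)(ii).

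For (i)\(\Rightarrow\)(ii), split \((0,\infty)\) at \(t=1\). On \([1,\infty)\) the bound \(|\Phi(t)|\leq \e^{-t}/(1-\e^{-t})\) from the proof of \cref{prop:Lp-zero-free} gives exponential decay, so \(\Phi\in L^p(1,\infty)\) for every \(p\geq1\). Near \(0\), I would bound \(\Phi\) pointwise. Abel summation with \(M(x)\) gives
\[
 \Phi(t)=\sum_{n\geq1}M(n)\bigl(\e^{-nt}-\e^{-(n+1)t}\bigr)
 =(1-\e^{-t})\sum_{n\geq1}M(n)\e^{-nt}.
\]
Under RH the Mertens criterion \eqref{eq:mertens-rh} yields \(|M(n)|\leq C_\varepsilon n^{1/2+\varepsilon}\). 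Hence for \(0<t\leq1\),
\[
 |\Phi(t)|\leq C_\varepsilon\, t\sum_{n\geq1}n^{1/2+\varepsilon}\e^{-nt}
 \ll_\varepsilon t\cdot t^{-3/2-\varepsilon}=t^{-1/2-\varepsilon},
\]
using the standard comparison \(\sum_n n^{a}\e^{-nt}\ll_a \Gamma(a+1)t^{-a-1}\) for \(a>0\) and \(0<t\leq1\). Then \(\int_0^1|\Phi|^p\,\dd t\ll\int_0^1 t^{-p(1/2+\varepsilon)}\,\dd t<\infty\) as soon as \(p(1/2+\varepsilon)<1\). For a given \(p<2\) this can be arranged by choosing \(\varepsilon<1/p-1/2\). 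Combining the two ranges gives \(\Phi\in L^p(0,\infty)\) for every \(1\leq p<2\).

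The main obstacle is whether one may invoke the Mertens criterion \eqref{eq:mertens-rh} as a black box. The paper quotes it from the literature, so I would use it directly. If a self-contained argument were wanted instead, I would get the pointwise bound from Mellin inversion: write \(\Phi(t)=\frac1{2\pi i}\int_{(c)}\Gamma(s)\zeta(s)^{-1}t^{-s}\,\dd s\) and shift the contour to \(\Re s=1/2+\varepsilon\). This uses the RH bound \(1/\zeta(\sigma+i\tau)\ll_\varepsilon|\tau|^{\varepsilon}\) for \(\sigma\geq1/2+\varepsilon\) together with the exponential decay of \(\Gamma\) on vertical lines, and it gives \(\Phi(t)\ll t^{-1/2-\varepsilon}\) directly. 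The delicate input there is the growth bound for \(1/\zeta\), which is the genuinely deep consequence of RH. Via Abel summation it is already packaged inside \eqref{eq:mertens-rh}, so citing the Mertens criterion is the route I would take.
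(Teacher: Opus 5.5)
Your proposal is correct and is essentially the paper's proof. The converse applies \cref{prop:Lp-zero-free} with \(p\) close to \(2\); the paper also notes that the functional-equation symmetry turns ``no zeros in \(\Re s>1/2\)'' into RH. The forward direction combines Abel summation with the RH form of the Mertens bound to get \(|\Phi(t)|\ll_\delta t^{-1/2-\delta}\) near \(0\), plus exponential decay at infinity. The only difference is cosmetic: you sum by parts discretely, \(\Phi(t)=(1-\e^{-t})\sum_{n\geq1}M(n)\e^{-nt}\), and compare a sum with an integral, whereas the paper uses \(\Phi(t)=t\int_1^\infty M(x)\e^{-tx}\,\dd x\) and evaluates a Gamma integral.
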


\begin{proof}
The equivalence of \textup{(ii)} and \textup{(iii)} is only a change of
notation.

Assume RH.  By \eqref{eq:mertens-rh}, for every \(\delta>0\),
\[
 M(x)=O_\delta(x^{1/2+\delta}).
\]
Abel summation gives
\begin{equation}\label{eq:Abel-Laplace}
 \Phi(t)
 =
 t\int_1^\infty M(x)\e^{-tx}\,\dd x.
\end{equation}
Indeed, the boundary contribution at infinity vanishes, while the
lower endpoint is harmless because \(M(x)=0\) for \(0\leq x<1\).

For \(0<t\leq1\), \eqref{eq:Abel-Laplace} yields
\[
 \begin{aligned}
 |\Phi(t)|
 &\ll_\delta
 t\int_1^\infty x^{1/2+\delta}\e^{-tx}\,\dd x\\
 &\leq
 t\int_0^\infty x^{1/2+\delta}\e^{-tx}\,\dd x\\
 &=
 \Gamma\!\left(\frac32+\delta\right)
 t^{-1/2-\delta}.
 \end{aligned}
\]
Fix \(1\leq p<2\).  Choose \(\delta>0\) sufficiently small that
\[
 p\left(\frac12+\delta\right)<1.
\]
Then
\[
 \int_0^1|\Phi(t)|^p\,\dd t
 \ll_{\delta,p}
 \int_0^1t^{-p(1/2+\delta)}\,\dd t
 <\infty.
\]
As noted in the proof of \cref{prop:Lp-zero-free}, \(\Phi\) decays
exponentially at infinity.  Hence
\[
 \Phi\in L^p(0,\infty)
 \qquad(1\leq p<2).
\]

Conversely, assume \textup{(ii)}.  Given any \(\sigma>1/2\), choose
\(p<2\) sufficiently close to \(2\) that
\[
 \frac1p<\sigma.
\]
By \cref{prop:Lp-zero-free}, \(\zeta(s)\neq0\) for
\(\Re s>1/p\), and in particular at every point with
\(\Re s=\sigma\).  Thus \(\zeta\) has no zeros in
\(\Re s>1/2\).  The functional equation and the symmetry of the
nontrivial zeros about the critical line imply RH.
\end{proof}

\begin{remark}[The endpoint]\label{rem:endpoint}
The theorem deliberately uses \(p<2\).  The proof does not assert that
RH implies \(\Phi\in L^2(0,\infty)\), and no endpoint statement is
needed for the equivalence.  The exponent \(2\) is critical because the
Mertens bound on RH leads to the borderline scale
\(|\Phi(t)|\lesssim t^{-1/2-o(1)}\) as \(t\downarrow0\).
\end{remark}

\begin{remark}[One exponent gives a zero-free half-plane]
For a fixed \(p>1\), the condition
\[
 \Phi\in L^p(0,\infty)
\]
already implies the zero-free region
\[
 \zeta(s)\neq0
 \qquad\left(\Re s>\frac1p\right)
\]
by \cref{prop:Lp-zero-free}.  The full RH criterion arises by allowing
\(p\) to approach \(2\) from below.
\end{remark}

\section{Local Fourier smoothing}

For the Fourier formulation, one keeps the finite exponential sum and averages its size on a short arc.  The relevant criterion is as follows.

\begin{theorem}[Local Fourier-moment criterion]\label{thm:fourier}
Fix \(c>0\).  The following assertions are equivalent.
\begin{enumerate}[label=\textup{(\roman*)}]
\item The Riemann hypothesis holds.
\item For every \(\eta>0\) and every finite \(q\ge1\),
\[
\Mcal_{q,c}(N)=O_{\eta,q,c}(N^\eta).
\]
\item For every \(\eta>0\), there is an unbounded set
\(Q_\eta\subseteq[1,\infty)\) such that
\[
\Mcal_{q,c}(N)=O_{\eta,q,c}(N^\eta)
\qquad(q\in Q_\eta).
\]
\end{enumerate}
\end{theorem}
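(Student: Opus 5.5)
The plan is to prove (i)\(\Rightarrow\)(ii)\(\Rightarrow\)(iii)\(\Rightarrow\)(i). The middle implication is trivial: take \(Q_\eta=[1,\infty)\). Both nontrivial directions rest on the Mertens criterion \eqref{eq:mertens-rh} and on the fact that on the arc \(|t|\le c/N\) the phases \(e^{2\pi int}\), \(n\le N\), turn by at most \(2\pi c\). So \(S_N(t)\) there is a smooth twist of the partial sums \(M(x)\), \(x\le N\).

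For (i)\(\Rightarrow\)(ii), use Abel summation in the form
\[
S_N(t)=M(N)e^{2\pi iNt}-2\pi i t\int_1^N M(x)e^{2\pi ixt}\,\dd x .
\]
Under RH, \(M(x)\ll_\varepsilon x^{1/2+\varepsilon}\). For \(|t|\le c/N\) this gives
\[
|S_N(t)|\ll_\varepsilon N^{1/2+\varepsilon}+\frac{c}{N}\,N^{3/2+\varepsilon}\ll_{c,\varepsilon}N^{1/2+\varepsilon}.
\]
Hence \(|P_N(t)|\ll N^{\varepsilon}\) uniformly on the arc. Since the local moment is a normalized average over the arc, \(\Mcal_{q,c}(N)\le\sup_{|t|\le c/N}|P_N(t)|\ll N^\varepsilon\) for every \(q\), with \(\varepsilon=\eta\). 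The proof in fact gives the uniform sup bound.

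For (iii)\(\Rightarrow\)(i), the goal is \(|M(N)|\ll_\eta N^{1/2+2\eta}\) for integer \(N\), which suffices by \eqref{eq:mertens-rh} since \(M\) is constant between integers. The first step is a Bernstein-type derivative bound. For \(|t|\le c/N\),
\[
|S_N(t)-S_N(0)|\le \sum_{n\le N}|e^{2\pi int}-1|\le 2\pi N^2|t|.
\]
This crude bound is too large, since it is of size \(N\), so instead I would shrink the window. On \(|t|\le \delta/N^{3/2}\) we get \(|S_N(t)-M(N)|\le 2\pi\delta N^{1/2}\). Suppose \(|M(N)|\ge 2\lambda\sqrt N\) with \(\lambda\ge 2\pi\delta\). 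Then \(|P_N|\ge\lambda\) on an interval of length \(\asymp\delta N^{-3/2}\), and so
\[
\Mcal_{q,c}(N)^q\gg \frac{N}{c}\,\delta N^{-3/2}\lambda^q .
\]
Taking \(q\)-th roots gives \(\lambda\ll \Mcal_{q,c}(N)\,(cN^{1/2}/\delta)^{1/q}\). Now choose \(q\in Q_\eta\) with \(q\ge 1/\eta\), which is possible because \(Q_\eta\) is unbounded. Then \(\lambda\ll N^{\eta}\cdot N^{1/(2q)}\ll N^{2\eta}\), that is, \(M(N)\ll N^{1/2+2\eta}\). Since \(\eta\) is arbitrary, RH follows.

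The main obstacle is the lower bound on \(|P_N|\) near \(0\): one has to transfer the point value \(P_N(0)=M(N)/\sqrt N\) to a set of positive measure inside the arc. The naive Lipschitz constant \(\sum n\sim N^2\) only works on a window of width \(N^{-3/2}\), and the resulting polynomial loss in the measure is exactly why (iii) needs arbitrarily large \(q\). That loss is absorbed through the exponent \(1/q\), which explains why only an unbounded set of exponents is required. I would double-check the constants in the Lipschitz step; a Bernstein inequality, \(\|S_N'\|_\infty\le 2\pi N\|S_N\|_\infty\), is an alternative if sup-norm control were available, but the direct bound above avoids that need.
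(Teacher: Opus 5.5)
Your proposal is correct and follows essentially the same route as the paper. For (i)\(\Rightarrow\)(ii) you both use Abel summation under RH. For (iii)\(\Rightarrow\)(i) you both use the crude Lipschitz bound \(|S_N'|\le\pi N(N+1)\) to make \(M(N)\) persist on a short interval, then bound the local \(q\)-th moment from below, choosing \(\eta\) before \(q\in Q_\eta\).

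The only difference is cosmetic. The paper uses the adaptive window \(|t|\le |M(N)|/(2\pi N(N+1))\), which gives the loss \(N^{1/(2(q+1))}\) in \cref{cor:Mertens-local}. Your fixed window \(|t|\le\delta N^{-3/2}\) gives the equally sufficient loss \(N^{1/(2q)}\); this needs \(\delta\le c\) so the window lies inside the arc, and the trivial case \(|M(N)|<4\pi\delta\sqrt N\) set aside.
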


\begin{remark}\label{rem:iii-precise}
The set \(Q_\eta\) may depend on \(\eta\).  In the proof of
\textup{(iii)}\(\Rightarrow\)\textup{(i)}, one therefore fixes
\(\eta\) first and then chooses a sufficiently large
\(q\in Q_\eta\).
\end{remark}

\section{From Denjoy's random walk to modern M\texorpdfstring{\"o}{o}bius randomness}
\label{sec:modern-randomness}

\subsection{The independent-sign picture}

Let \((\varepsilon_n)_{n\geq1}\) be independent Rademacher variables,
so that
\[
 \Pp(\varepsilon_n=1)=\Pp(\varepsilon_n=-1)=\frac12.
\]
The random walk
\[
 R_N=\sum_{n\leq N}\varepsilon_n
\]
satisfies, for every \(\delta>0\),
\[
 R_N=O_\delta(N^{1/2+\delta})
 \qquad\text{almost surely}.
\]
More precisely, the law of the iterated logarithm gives
(see, for example, \cite{Kahane})
\[
 \limsup_{N\to\infty}
 \frac{R_N}{\sqrt{2N\log\log N}}=1
 \qquad\text{almost surely}.
\]
Replacing the deterministic signs of the M\"obius function by
independent signs therefore produces an almost-sure analogue of
\eqref{eq:mertens-rh}.  This is the simplest form of the Denjoy
heuristic.

A model retaining the squarefree support is
\[
 R_N^\mu=\sum_{n\leq N}\mu(n)^2\varepsilon_n.
\]
Its variance satisfies
\[
 \E|R_N^\mu|^2
 =\sum_{n\leq N}\mu(n)^2
 \sim\frac6{\pi^2}N.
\]
Similarly, one may consider the random trigonometric polynomials
\[
 Q_N(t,\omega)
 =
 \frac1{\sqrt N}
 \sum_{n\leq N}\mu(n)^2\varepsilon_n(\omega)\e^{2\pi i nt}.
\]
For each fixed \(q<\infty\), the Khintchine inequalities
\cite{Khinchin,Kahane} give uniform moment estimates for \(Q_N\).
These independent models are useful benchmarks, but they deliberately
remove the multiplicative correlations of the arithmetic function.

\subsection{Random multiplicative functions}

The modern probabilistic theory keeps multiplicativity.  Wintner
\cite{Wintner} introduced a Rademacher random multiplicative function
by choosing independent signs on the primes, extending
multiplicatively to squarefree integers, and setting the value equal to
zero on integers divisible by a square.  Thus randomness is placed at
the primes, not independently at every integer.  A Steinhaus random
multiplicative function is obtained instead by choosing independent
uniform random variables on the unit circle at the primes and extending
completely multiplicatively.  These models are closer in spirit to
\(\mu\), Dirichlet characters, and Euler products.

The distinction matters.  The terms in a random multiplicative
function are not independent.  Harper proved, for both the Steinhaus
and Rademacher models, that
\begin{equation}\label{eq:harper-first-moment}
 \E\left|\sum_{n\leq x} f(n)\right|
 \asymp
 \frac{\sqrt{x}}{(\log\log x)^{1/4}},
\end{equation}
so even the first absolute moment is smaller than the naive
independent-sign scale \(\sqrt{x}\) \cite{HarperLowMoments}.  The
appearance of this logarithmic factor is one indication that
multiplicativity changes the probabilistic picture in an essential
way.  Harper's work also connects these moments with critical
multiplicative chaos.  In later work he carried related ``better than
square-root'' estimates back to families of deterministic character and
zeta sums, including sums with bounded multiplicative weights such as
\(\mu(n)\) \cite{HarperTypical}.

There are now several forms of central limit behavior, but they depend
on the sum being considered.  Chatterjee and Soundararajan
\cite{ChatterjeeSoundararajan} proved a central limit theorem for
random multiplicative functions in suitable short intervals.
Soundararajan and Xu \cite{SoundararajanXu} later established central
limit theorems for a variety of subsets and weighted sums, while also
emphasizing that the unrestricted long partial sum does not have the
naive Gaussian behavior.  Gorodetsky and Wong
\cite{GorodetskyWongMartingale} obtained, for a wide class of weighted
random multiplicative sums, non-Gaussian limiting distributions with a
random variance arising from an associated random measure.  Their more
recent work \cite{GorodetskyWongLimit} determines the limiting
distribution of a normalized long Steinhaus partial sum and relates the
answer to critical multiplicative chaos.  Taken together, these results
show that even within the well-controlled setting of random
multiplicative functions, the limiting behavior of partial sums is
delicate: whether it is Gaussian, non-Gaussian, or governed by critical
multiplicative chaos depends on the specific sum, weight, and scale
under consideration.

A very recent result of Harper, Soundararajan, and Xu
\cite{HarperSoundararajanXu2026} gives a useful further contrast between
long and short sums.  For a Steinhaus random multiplicative function and
short intervals \([x,x+y]\) with \(y\to\infty\) and \(y=o(x)\), they
prove a Gaussian limiting distribution after the appropriate
normalization.  When \(y\asymp x\), no normalization gives a
non-degenerate Gaussian limit.  Thus the phrase ``behaves randomly''
has to be qualified: the answer depends both on the random model and on
the scale at which it is observed.

\subsection{What is known for the deterministic M\"obius function}

The probabilistic models should not obscure the progress made for
\(\mu\) itself.  Matom\"aki and Radziwi\l\l \cite{MatomakiRadziwill}
proved that M\"obius cancellation occurs in almost all intervals
\([x,x+\psi(x)]\) whenever \(\psi(x)\to\infty\), however slowly.  This
was striking in part because such short-interval cancellation had not
previously been available even under RH in comparable generality.
Their methods led, with Tao, to averaged forms of Chowla's conjecture
\cite{MatomakiRadziwillTao}.  Later work of Matom\"aki, Radziwi\l\l,
Tao, Ter\"av\"ainen, and Ziegler proved higher-order uniformity for
bounded non-pretentious multiplicative functions in short intervals on
average, with applications to the Liouville function and averaged
Chowla-type statements \cite{MRTTZ}.

These deterministic results and the theory of random multiplicative
functions address different questions, but they meet at a common
point: square-root cancellation is only the beginning of the story.
One must also understand correlations, short scales, limiting laws, and
the constraints imposed by multiplicativity.  Sarnak's conjecture is
another expression of this broader point of view, formulated in terms
of correlations with low-complexity deterministic dynamics.

\subsection{The local Fourier model used below}

The local Fourier construction in this paper keeps the coefficients
\(\mu(n)\) unchanged and randomizes only the evaluation point:
\[
 X_{N,c}=P_N(U_{N,c}).
\]
Thus
\[
 \E|X_{N,c}|^q=\Mcal_{q,c}(N)^q.
\]
The value controlling RH, \(P_N(0)=M(N)/\sqrt N\), occurs at a point
of probability zero.  The relevant elementary observation is that the
bounded degree of \(P_N\) forces a large value at zero to persist on a
short interval, where it can be detected by sufficiently high local
moments.  This is a different use of probability from the random
multiplicative models just discussed.

\section{A probabilistic form of Denjoy's heuristic}
\label{sec:probabilistic-form}

We now make precise the heuristic recalled in the introduction.  This
is \emph{not} Wintner's random multiplicative model of
\cref{sec:modern-randomness}: the coefficients below are independent
at every integer, so multiplicativity is intentionally discarded.  The
advantage is that Denjoy's elementary random-walk idea, described by
Edwards \cite[pp.~263--266]{Edwards}, becomes an exact statement in a
very simple probability space, in which the summatory function behaves
like a random walk and should display square-root cancellation.  The
Riemann hypothesis then appears as the deterministic arithmetic
analogue of an almost-sure random statement.

For the smoothing criterion, the same idea can be formulated exactly
by replacing the deterministic M\"obius coefficients with independent
random variables.
Let
\[
G=\{-1,0,1\},
\]
identified with the additive group \(\mathbb Z/3\mathbb Z\), and let
\[
\Omega=G^{\mathbb N}
\]
be endowed with its Haar probability measure \(\nu\).  For
\(\omega\in\Omega\), write
\[
\xi_n(\omega)=\omega(n).
\]
Then the random variables \((\xi_n)_{n\geq1}\) are independent and
uniformly distributed on \(\{-1,0,1\}\).  In particular,
\[
\mathbb E\xi_n=0,
\qquad
\mathbb E|\xi_n|^2=\frac23.
\]

For \(\omega\in\Omega\), define the random discrete Laplace transform
\[
\Phi_\omega(t)
=
\sum_{n=1}^{\infty}\xi_n(\omega)e^{-nt},
\qquad t>0.
\]
The series converges absolutely for every \(t>0\).

\begin{proposition}[Almost-sure \(L^p\)-integrability]
\label{prop:random-laplace}
For every \(0<p<2\),
\[
\nu\left\{
\omega\in\Omega:
\Phi_\omega\in L^p(0,\infty)
\right\}=1.
\]
Equivalently, for every \(0<\varepsilon<2\),
\[
\nu\left\{
\omega\in\Omega:
\Phi_\omega\in L^{2-\varepsilon}(0,\infty)
\right\}=1.
\]
\end{proposition}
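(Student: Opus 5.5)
The plan is to split \((0,\infty)\) into \((0,1]\) and \([1,\infty)\) and to control \(\Phi_\omega\) near \(0\) in expectation. On \([1,\infty)\) nothing random is needed. For every \(\omega\), \(|\Phi_\omega(t)|\leq e^{-t}/(1-e^{-t})\), exactly as in the proof of \cref{prop:Lp-zero-free}. Hence \(\int_1^\infty|\Phi_\omega|^p\,\dd t<\infty\) for all \(p>0\) and all \(\omega\). It therefore suffices to show that
\[
\E\int_0^1|\Phi_\omega(t)|^p\,\dd t<\infty,
\]
since a nonnegative random variable with finite expectation is finite almost surely.

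By Tonelli's theorem (joint measurability of \((t,\omega)\mapsto\Phi_\omega(t)\) is clear, since \(\Phi_\omega\) is a pointwise limit of continuous functions of finitely many coordinates), this expectation equals \(\int_0^1\E|\Phi_\omega(t)|^p\,\dd t\). For fixed \(t>0\), the variables \(\xi_n e^{-nt}\) are independent and centered, with \(\E|\xi_n|^2=2/3\). So
\[
\E|\Phi_\omega(t)|^2=\frac23\sum_{n\geq1}e^{-2nt}=\frac23\,\frac{e^{-2t}}{1-e^{-2t}}\leq\frac{C}{t},\qquad 0<t\leq1.
\]
Here the series converges in \(L^2(\nu)\) and also pointwise, so the two limits agree. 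For \(0<p\leq2\), Jensen's (or Lyapunov's) inequality gives \(\E|\Phi_\omega(t)|^p\leq(\E|\Phi_\omega(t)|^2)^{p/2}\leq C^{p/2}t^{-p/2}\). This is integrable on \((0,1]\) precisely when \(p<2\), which is the required range. No Khintchine inequality is needed, because \(p<2\) lets us pass from the second moment downward.

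Combining the two ranges, \(\Phi_\omega\in L^p(0,\infty)\) for \(\nu\)-almost every \(\omega\), for each fixed \(p\in(0,2)\). The equivalent formulation with \(\varepsilon=2-p\) is just a change of notation. If one also wants a single full-measure set that works for all \(p<2\) simultaneously, intersect over \(p_k=2-1/k\). This suffices because on \((0,1]\) membership in \(L^{p_k}\) implies membership in \(L^{p}\) for every \(p\leq p_k\), and the tail on \([1,\infty)\) is always fine.

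The only step requiring care is the Tonelli interchange and the measurability of \((t,\omega)\mapsto|\Phi_\omega(t)|^p\), together with identifying the \(L^2(\nu)\) sum with the pointwise sum. I expect this to be routine rather than a genuine obstacle. The substantive point is simply that the variance scales like \(1/t\), which puts the critical exponent at \(2\), mirroring \cref{rem:endpoint}.
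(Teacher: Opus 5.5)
Your proof is correct and follows the paper's argument essentially step for step: the deterministic bound $|\Phi_\omega(t)|\le 1/(e^t-1)$ on $[1,\infty)$, the variance computation $\E|\Phi_\omega(t)|^2=\frac{2}{3(e^{2t}-1)}\ll t^{-1}$, Lyapunov's inequality giving $t^{-p/2}$, and Tonelli's theorem to conclude almost-sure finiteness on $(0,1]$. Your extra remarks are correct refinements that the paper handles only implicitly or later through $A_{<2}$: you identify the $L^2(\nu)$ sum with the pointwise sum, and you obtain a single full-measure set by intersecting over $p_k=2-1/k$.
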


\begin{proof}
For each \(t>0\), independence and the vanishing of the means give
\[
\begin{aligned}
\mathbb E|\Phi_\omega(t)|^2
&=
\mathbb E\left|
\sum_{n=1}^{\infty}\xi_n e^{-nt}
\right|^2\\
&=
\sum_{n=1}^{\infty}\mathbb E|\xi_n|^2e^{-2nt}\\
&=
\frac23\sum_{n=1}^{\infty}e^{-2nt}\\
&=
\frac{2}{3(e^{2t}-1)}.
\end{aligned}
\]
Hence, for \(0<t\leq1\),
\[
\mathbb E|\Phi_\omega(t)|^2\ll t^{-1}.
\]
If \(0<p<2\), Lyapunov's inequality yields
\[
\mathbb E|\Phi_\omega(t)|^p
\leq
\left(\mathbb E|\Phi_\omega(t)|^2\right)^{p/2}
\ll_p t^{-p/2}.
\]
Since \(p<2\),
\[
\int_0^1 t^{-p/2}\,dt<\infty.
\]
Tonelli's theorem therefore gives\footnote{The map
\((\omega,t)\mapsto|\Phi_\omega(t)|^p\) is jointly measurable: it is a
pointwise limit, as \(K\to\infty\), of the partial sums
\(\bigl|\sum_{n\leq K}\xi_n(\omega)e^{-nt}\bigr|^p\), each of which is
jointly continuous in \((\omega,t)\) since it depends on only finitely
many coordinates of \(\omega\).}
\[
\mathbb E\left[
\int_0^1|\Phi_\omega(t)|^p\,dt
\right]
=
\int_0^1\mathbb E|\Phi_\omega(t)|^p\,dt
<\infty.
\]
It follows that
\[
\int_0^1|\Phi_\omega(t)|^p\,dt<\infty
\]
for \(\nu\)-almost every \(\omega\).

At infinity no probabilistic estimate is required, since for every
\(\omega\in\Omega\),
\[
|\Phi_\omega(t)|
\leq
\sum_{n=1}^{\infty}e^{-nt}
=
\frac1{e^t-1}.
\]
Thus
\[
\int_1^\infty|\Phi_\omega(t)|^p\,dt<\infty
\]
for every \(\omega\), and the conclusion follows.
\end{proof}

The almost-sure statement has a useful topological complement.  Let
\[
A_p=
\left\{
\omega\in G^{\mathbb N}:
\Phi_\omega\in L^p(0,\infty)
\right\}.
\]

\begin{proposition}[Measure versus topology]
\label{prop:measure-topology}
The following assertions hold.
\begin{enumerate}[label=\textup{(\roman*)}]
\item If \(0<p<1\), then
\[
A_p=G^{\mathbb N}.
\]
\item If \(1\leq p<2\), then
\[
\nu(A_p)=1,
\qquad
\operatorname{int}(A_p)=\varnothing.
\]
\end{enumerate}
\end{proposition}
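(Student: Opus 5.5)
For part \textup{(i)}, I will use the trivial pointwise bound valid for every \(\omega\in G^{\mathbb N}\):
\[
|\Phi_\omega(t)|\le\sum_{n\ge1}e^{-nt}=\frac1{e^t-1}.
\]
Near \(0\) this gives \(|\Phi_\omega(t)|\le 1/t\), because \(e^t-1\ge t\). At infinity it gives exponential decay, as in the proof of \cref{prop:random-laplace}. For \(0<p<1\) the integral \(\int_0^1 t^{-p}\,dt\) is finite, so \(\Phi_\omega\in L^p(0,\infty)\) for every \(\omega\), and hence \(A_p=G^{\mathbb N}\).

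For part \textup{(ii)}, the statement \(\nu(A_p)=1\) is exactly \cref{prop:random-laplace}. The substance is that \(A_p\) has empty interior. A basic open set of the product topology is a cylinder: the first \(K\) coordinates are prescribed, say \(\omega(n)=a_n\) for \(n\le K\), and the rest are free. It therefore suffices to find, inside every such cylinder, a point \(\omega\notin A_p\).

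I will take \(\omega(n)=a_n\) for \(n\le K\) and \(\omega(n)=1\) for \(n>K\). Then
\[
\Phi_\omega(t)=\sum_{n\le K}a_ne^{-nt}+\frac{e^{-(K+1)t}}{1-e^{-t}}.
\]
The first term is bounded by \(K\). The second term is at least \(e^{-(K+1)}/t\) on \((0,1]\), using \(1-e^{-t}\le t\). Hence \(|\Phi_\omega(t)|\ge c_K/t - K\) on \((0,1]\), and this is \(\ge c_K/(2t)\) for \(t\) small enough. Since \(p\ge1\), \(\int_0 t^{-p}\,dt\) diverges, so \(\Phi_\omega\notin L^p\). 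This shows \(\operatorname{int}(A_p)=\varnothing\).

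I expect no serious obstacle. The only points needing care are the identification of the basic open sets of the product topology with cylinders, and the elementary lower bound for the geometric tail once the finitely many prescribed terms are absorbed.

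If one also wants meagreness, as the introduction advertises, one route is to write \(A_p=\bigcup_M\{\omega:\int_0^\infty|\Phi_\omega|^p\le M\}\). Each of these sets is closed, by Fatou's lemma and the pointwise convergence \(\Phi_{\omega_k}(t)\to\Phi_\omega(t)\) under coordinatewise convergence. Each has empty interior, by the argument above, so it is nowhere dense, and \(A_p\) is meagre. This is not required by the statement, though.
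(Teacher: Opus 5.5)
Your proof is correct and follows the paper's argument essentially step for step. For (i) you use the uniform bound $|\Phi_\omega(t)|\le 1/(e^t-1)$, and for (ii) you complete the prescribed coordinates by $\omega(n)=1$, so the geometric tail forces $|\Phi_\omega(t)|\gg 1/t$ near $0$. Your explicit inequalities $e^t-1\ge t$ and $1-e^{-t}\le t$ make rigorous the asymptotics that the paper only states as $\sim 1/t$. Your optional Fatou-based meagreness sketch is sound and agrees in substance with the paper's later measure--category proposition.
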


\begin{proof}
For every \(\omega\in G^{\mathbb N}\),
\[
|\Phi_\omega(t)|
\leq
\sum_{n=1}^{\infty}e^{-nt}
=
\frac1{e^t-1}.
\]
As \(t\downarrow0\), the right-hand side is asymptotic to \(1/t\).
Hence, for \(0<p<1\),
\[
\int_0^1|\Phi_\omega(t)|^p\,dt<\infty
\]
for every \(\omega\).  Exponential decay at infinity then gives
\(A_p=G^{\mathbb N}\).

Now let \(1\leq p<2\).  By
\cref{prop:random-laplace}, \(\nu(A_p)=1\).  To prove that \(A_p\)
has empty interior, consider a nonempty basic open set in the product
topology.  Such a set fixes only finitely many coordinates, say
\[
\omega_1,\ldots,\omega_N.
\]
Complete this finite sequence by setting
\[
\omega_n=1
\qquad(n>N).
\]
For the resulting \(\omega\),
\[
\Phi_\omega(t)
=
\sum_{n=1}^{N}\omega_n e^{-nt}
+
\frac{e^{-(N+1)t}}{1-e^{-t}}.
\]
The finite sum remains bounded as \(t\downarrow0\), whereas
\[
\frac{e^{-(N+1)t}}{1-e^{-t}}
\sim\frac1t.
\]
Therefore
\[
|\Phi_\omega(t)|\sim\frac1t
\qquad(t\downarrow0),
\]
and so
\[
\int_0^1|\Phi_\omega(t)|^p\,dt=\infty
\qquad(p\geq1).
\]
Thus every nonempty basic open set contains a point outside \(A_p\),
which proves
\[
\operatorname{int}(A_p)=\varnothing.
\]
\end{proof}

The proposition gives a precise smoothing form of Denjoy's old
probabilistic intuition.  Independent centered coefficients of bounded
variance satisfy the required \(L^{2-\varepsilon}\)-integrability
almost surely.  For the M\"obius sequence, the analogous statement is
not a consequence of independence; it is equivalent to the arithmetic
cancellation encoded by the Riemann hypothesis.

\section{The multiplicative semigroup action and the M\texorpdfstring{\"o}{o}bius orbit}

The random coefficient space carries a natural action of the
multiplicative semigroup
\[
S=\mathbb N_{+}.
\]
For \(\ell\in S\), define
\[
(\sigma_\ell\omega)(n)=\omega(\ell n),
\qquad
\omega\in G^{\mathbb N}.
\]
The corresponding action on discrete Laplace transforms is
\[
T_\ell\Phi_\omega
=
\Phi_{\sigma_\ell\omega},
\]
that is,
\[
(T_\ell\Phi_\omega)(t)
=
\sum_{n=1}^{\infty}\omega(\ell n)e^{-nt}.
\]
Since
\[
\sigma_\ell\circ\sigma_m=\sigma_{\ell m},
\]
the family \((\sigma_\ell)_{\ell\geq1}\) is a commutative semigroup
action.

This construction is close in spirit to Furstenberg's use of dynamical
systems in number theory \cite{Furstenberg1967,Furstenberg1981}.  A basic
principle of that viewpoint is to encode arithmetic operations as
commuting transformations and then study the resulting system by means
of invariant measures, recurrence, ergodicity, and orbit structure.  In
the present setting multiplication in \(\mathbb N_+\) acts directly on
the index set through \(n\mapsto \ell n\), hence on the coefficient
space through \(\sigma_\ell\).  This is not the same system as
Furstenberg's classical multiplication maps on the circle, such as
\(x\mapsto 2x\pmod 1\) and \(x\mapsto 3x\pmod 1\), and no use of the
correspondence principle is intended.  The analogy is rather that an
arithmetic structure is converted into dynamics.  Here the distinguished
arithmetic point is the M"obius sequence, and the Riemann hypothesis
becomes a membership statement for that point in a natural measurable
subset of the dynamical space.

\begin{proposition}
\label{prop:semigroup-action}
For every \(\ell\geq2\), the transformation
\[
\sigma_\ell:G^{\mathbb N}\longrightarrow G^{\mathbb N}
\]
preserves Haar measure.  It is noninvertible, exact, strongly mixing,
ergodic, and has infinite Kolmogorov--Sinai entropy.  Consequently, the
full \(\mathbb N_{+}\)-action is ergodic.
\end{proposition}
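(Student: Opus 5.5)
The natural approach is to reduce everything to a statement about Bernoulli shifts by splitting the index set \(\mathbb N\) into orbits of multiplication by \(\ell\). Fix \(\ell\geq2\). Every \(n\geq1\) can be written uniquely as \(n=m\ell^k\) with \(k\geq0\) and \(\ell\nmid m\). Let \(R_\ell=\{m\geq1:\ell\nmid m\}\); this is an infinite set. The map \(\omega\mapsto\bigl((\omega(m\ell^k))_{k\geq0}\bigr)_{m\in R_\ell}\) is then a homeomorphism of \(G^{\mathbb N}\) onto \(\prod_{m\in R_\ell}G^{\mathbb N_0}\). It carries Haar measure \(\nu\) to the product of the Haar measures \(\nu_0\) on the factors, since both are the product of uniform measures over the same coordinates. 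Under this conjugacy, \(\sigma_\ell\) acts on each factor \(G^{\mathbb N_0}\) as the one-sided left shift \(\tau\), because \((\sigma_\ell\omega)(m\ell^k)=\omega(m\ell^{k+1})\). So \(\sigma_\ell\) is isomorphic to the product of countably many copies of the one-sided \((1/3,1/3,1/3)\)-Bernoulli shift. Equivalently, it is a one-sided Bernoulli shift with alphabet \(G^{R_\ell}\), a compact group with Haar measure.

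Each listed property follows from this identification. Measure preservation can be checked on cylinder sets: the preimage under \(\sigma_\ell\) of a cylinder fixing finitely many coordinates \(n_1,\dots,n_r\) is the cylinder fixing \(\ell n_1,\dots,\ell n_r\) to the same values. Both cylinders have measure \(3^{-r}\), and cylinders generate the \(\sigma\)-algebra.

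\textbf{Noninvertibility.} The map \(\sigma_\ell\) forgets the coordinates \(\omega(m)\) with \(\ell\nmid m\). Hence every fibre is a copy of \(G^{R_\ell}\), and \(\sigma_\ell\) is not injective even modulo null sets: the conditional measure on each fibre is nonatomic.

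\textbf{Exactness.} The tail \(\sigma\)-algebra \(\bigcap_k\sigma_\ell^{-k}\mathcal B\) is generated by the coordinates indexed by \(\ell^k\mathbb N\) for every \(k\). These are independent of any fixed finite set of coordinates, so Kolmogorov's zero--one law shows it is trivial. Exactness implies strong mixing, and strong mixing implies ergodicity. Mixing can also be checked directly: for cylinders \(A,B\), once \(k\) is large the coordinate sets of \(A\) and of \(\sigma_\ell^{-k}B\) are disjoint, so \(\nu(A\cap\sigma_\ell^{-k}B)=\nu(A)\nu(B)\) exactly.

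\textbf{Entropy.} Take the partition \(\mathcal P_r\) by the coordinates \(m_1,\dots,m_r\in R_\ell\) at level \(k=0\). Its iterates \(\sigma_\ell^{-j}\mathcal P_r\) are independent, so \(h(\sigma_\ell,\mathcal P_r)=r\log 3\). Letting \(r\to\infty\) gives \(h_\nu(\sigma_\ell)=\infty\).

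\textbf{The full action.} A set \(E\) invariant under the whole action satisfies \(\sigma_\ell^{-1}E=E\) mod \(\nu\) for every \(\ell\), in particular for \(\ell=2\). Ergodicity of \(\sigma_2\) then forces \(\nu(E)\in\{0,1\}\).

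I expect the main obstacle to be bookkeeping rather than analysis. It consists of verifying carefully that the decomposition is a measure-theoretic isomorphism, including the measurability of the coordinate relabeling. It also requires fixing the conventions: exactness taken as triviality of \(\bigcap_k\sigma_\ell^{-k}\mathcal B\), and entropy computed via a one-sided generator or via the infinite-alphabet Bernoulli structure. If the entropy computation with an infinite alphabet feels delicate, the finite partitions \(\mathcal P_r\) above give the lower bound \(r\log3\) directly, and that lower bound is all that is needed.
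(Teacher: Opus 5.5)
Your proposal is correct and follows essentially the same route as the paper: decompose $\mathbb N$ into the chains $\{m\ell^k: k\geq 0\}$ with $\ell\nmid m$, identify $\sigma_\ell$ with a countable product of one-sided Bernoulli shifts, and read off measure preservation, exactness, mixing, ergodicity, and infinite entropy, with ergodicity of the full action following from that of $\sigma_2$. Your version is somewhat more self-contained than the paper's, which cites the classical properties of Bernoulli products: you prove exactness via Kolmogorov's zero--one law, deduce mixing from exactness (and also check it directly on cylinders), and obtain the entropy lower bound $r\log 3$ from explicit finite partitions.
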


\begin{proof}
Let \(C\) be a cylinder set determined by distinct coordinates
\(n_1,\dots,n_r\):
\[
C=
\left\{
\omega:
\omega(n_j)=a_j,\ 1\leq j\leq r
\right\}.
\]
Then
\[
\sigma_\ell^{-1}C
=
\left\{
\omega:
\omega(\ell n_j)=a_j,\ 1\leq j\leq r
\right\}.
\]
The indices \(\ell n_1,\dots,\ell n_r\) are distinct, and therefore
\[
\nu(\sigma_\ell^{-1}C)=3^{-r}=\nu(C).
\]
Since cylinder sets generate the Borel sigma-algebra,
\(\sigma_\ell\) preserves \(\nu\).

The map is not invertible for \(\ell>1\), since the coordinates whose
indices are not divisible by \(\ell\) are discarded.  Every
\(n\in\mathbb N\) can be written uniquely as
\[
n=r\ell^k,
\qquad \ell\nmid r.
\]
Thus \(\mathbb N\) is the disjoint union of the chains
\[
\{r,r\ell,r\ell^2,\ldots\},
\qquad \ell\nmid r.
\]
On each chain, \(\sigma_\ell\) acts as the one-sided Bernoulli shift.
Hence \(\sigma_\ell\) is isomorphic to a countable product of
one-sided Bernoulli shifts.

This description implies strong mixing and ergodicity.\footnote{These
properties of the one-sided Bernoulli shift, and their persistence
under countable independent products, are classical; see, e.g.,
\cite{CFS} or \cite{Walters}.} It also gives
exactness: the sigma-algebra
\[
\sigma_\ell^{-k}\mathcal B
\]
is generated by coordinates with indices divisible by \(\ell^k\), and
the intersection
\[
\bigcap_{k\geq1}\sigma_\ell^{-k}\mathcal B
\]
is trivial modulo null sets.

Each chain contributes entropy \(\log 3\).  Since there are countably
many independent chains, one obtains
\[
h_\nu(\sigma_\ell)=+\infty.
\]
Finally, a set invariant under the full semigroup is in particular
invariant under \(\sigma_2\), and therefore has measure zero or one.
\end{proof}

The semigroup action is also mixing when the multiplier tends to
infinity.  If \(A\) and \(B\) are cylinder sets, then \(A\) depends on
coordinates in a finite set \(E\subset\mathbb N\), while
\(\sigma_\ell^{-1}B\) depends on coordinates in \(\ell F\) for another
finite set \(F\).  For all sufficiently large \(\ell\),
\[
E\cap \ell F=\varnothing,
\]
so independence gives
\[
\nu(A\cap\sigma_\ell^{-1}B)=\nu(A)\nu(B).
\]
Approximation by cylinder sets yields
\[
\nu(A\cap\sigma_\ell^{-1}B)
\longrightarrow
\nu(A)\nu(B)
\qquad(\ell\to\infty)
\]
for arbitrary measurable sets \(A\) and \(B\).

The almost-sure \(L^p\) property is stable along the entire
multiplicative orbit.  If \(1\leq p<2\), then
\[
\nu(A_p)=1,
\qquad
A_p=
\left\{
\omega:
\Phi_\omega\in L^p(0,\infty)
\right\}.
\]
Since every \(\sigma_\ell\) preserves \(\nu\),
\[
\nu(\sigma_\ell^{-1}A_p)=1.
\]
Because the semigroup is countable,
\[
\nu\left(
\bigcap_{\ell\geq1}\sigma_\ell^{-1}A_p
\right)=1.
\]
Thus, for almost every \(\omega\),
\[
\Phi_{\sigma_\ell\omega}\in L^p(0,\infty)
\qquad\text{for every }\ell\geq1.
\]

The Möbius sequence defines a highly nongeneric point of this action.
For \(\omega(n)=\mu(n)\),
\[
(T_\ell\Phi_\mu)(t)
=
\sum_{n=1}^{\infty}\mu(\ell n)e^{-nt}.
\]
If \(\ell\) is not squarefree, then \(\mu(\ell n)=0\) for every \(n\),
and consequently
\[
T_\ell\Phi_\mu=0.
\]
If \(\ell\) is squarefree, then
\[
\mu(\ell n)
=
\begin{cases}
\mu(\ell)\mu(n),&(n,\ell)=1,\\
0,&(n,\ell)>1,
\end{cases}
\]
so that
\[
(T_\ell\Phi_\mu)(t)
=
\mu(\ell)
\sum_{\substack{n\geq1\\(n,\ell)=1}}
\mu(n)e^{-nt}.
\]
The orbit of the Möbius point therefore reflects its multiplicative
arithmetic structure and differs sharply from a generic Bernoulli
orbit.

\section{Measure, category, and the Möbius point}
\label{sec:measure-category}

The preceding random model admits a natural formulation in descriptive
set theory.  For \(p>0\), let
\[
A_p=
\left\{
\omega\in G^{\mathbb N}:
\Phi_\omega\in L^p(0,\infty)
\right\}.
\]
The set \(A_p\) is Borel and, in fact, has low complexity in the Borel
hierarchy.

For \(m\geq1\), define
\[
I_m(\omega)
=
\int_{1/m}^{m}|\Phi_\omega(t)|^p\,dt.
\]
On the compact interval \([1/m,m]\), the series defining
\(\Phi_\omega\) converges uniformly, uniformly in \(\omega\).
Consequently, the map
\[
\omega\longmapsto I_m(\omega)
\]
is continuous.

Moreover,
\[
\int_0^\infty|\Phi_\omega(t)|^p\,dt
=
\sup_{m\geq1}I_m(\omega).
\]
It follows that
\[
A_p
=
\bigcup_{K=1}^{\infty}
\bigcap_{m=1}^{\infty}
\left\{
\omega:I_m(\omega)\leq K
\right\}.
\]
Thus \(A_p\) is an \(F_\sigma\) subset of \(G^{\mathbb N}\).

\begin{proposition}[Measure and category]
\label{prop:measure-category}
For \(0<p<1\),
\[
A_p=G^{\mathbb N}.
\]
For \(1\leq p<2\),
\[
\nu(A_p)=1,
\]
but \(A_p\) is meagre.  Equivalently,
\[
G^{\mathbb N}\setminus A_p
\]
has measure zero and contains a dense \(G_\delta\) set.
\end{proposition}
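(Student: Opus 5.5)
The first two assertions are already available. The case \(0<p<1\) is \cref{prop:measure-topology}(i). The statement \(\nu(A_p)=1\) for \(1\leq p<2\) is \cref{prop:random-laplace}. Hence \(\nu(G^{\mathbb N}\setminus A_p)=0\). What remains is to show that \(A_p\) is meagre, or equivalently that its complement contains a dense \(G_\delta\) set. These are equivalent because \(G^{\mathbb N}\) is a compact metrizable space, hence Baire, and every meagre set is contained in a countable union of closed nowhere dense sets.

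The plan uses the \(F_\sigma\) description just established:
\[
A_p=\bigcup_{K\geq1}F_K,
\qquad
F_K=\bigcap_{m\geq1}\{\omega: I_m(\omega)\leq K\}.
\]
Each set \(\{I_m\leq K\}\) is closed because \(\omega\mapsto I_m(\omega)\) is continuous. Hence each \(F_K\) is closed. A closed set is nowhere dense exactly when it has empty interior. Since \(F_K\subseteq A_p\), we get \(\operatorname{int}(F_K)\subseteq\operatorname{int}(A_p)=\varnothing\) by \cref{prop:measure-topology}(ii). Therefore \(A_p\) is a countable union of closed nowhere dense sets, which means it is meagre.

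For the equivalent formulation, take complements:
\[
G^{\mathbb N}\setminus A_p=\bigcap_K (G^{\mathbb N}\setminus F_K).
\]
This is a countable intersection of open sets, and each of them is dense because \(F_K\) has empty interior. By the Baire category theorem the intersection is a dense \(G_\delta\). So the complement of \(A_p\) is itself a dense \(G_\delta\) of measure zero.

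The only point needing care is the continuity of \(I_m\), on which the closedness of \(F_K\) rests. The paper asserts it; to justify it, note that on \([1/m,m]\) the tail \(\sum_{n>N}|\xi_n|e^{-nt}\leq e^{-N/m}/(1-e^{-1/m})\) is uniformly small. Hence \(\Phi_\omega\) is a uniform limit of functions that depend continuously on finitely many coordinates. Since \(x\mapsto|x|^p\) is uniformly continuous on bounded sets, \(I_m\) is continuous. A second, minor point is the identity \(\int_0^\infty|\Phi_\omega|^p=\sup_m I_m(\omega)\), which follows from monotone convergence. Everything else is a direct application of Baire's theorem.
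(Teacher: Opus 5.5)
Your proof is correct and is essentially the paper's argument: both decompose \(A_p\) into the closed sets \(C_K=\bigcap_{m}\{\omega: I_m(\omega)\leq K\}\) and show that each has empty interior. You get the empty interior from \(\operatorname{int}(A_p)=\varnothing\) in Proposition~\ref{prop:measure-topology}(ii), whereas the paper re-runs the same extension-by-\(\omega_n=1\) argument inline; your justification of the continuity of \(I_m\) and your explicit Baire-category derivation that the complement is a dense \(G_\delta\) supply details the paper leaves implicit.
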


\begin{proof}
The first statement was proved in
\cref{prop:measure-topology}.  Suppose \(1\leq p<2\).  The full-measure
statement follows from \cref{prop:random-laplace}.

Write
\[
A_p=\bigcup_{K=1}^{\infty}C_K,
\qquad
C_K=
\bigcap_{m=1}^{\infty}
\left\{
\omega:I_m(\omega)\leq K
\right\}.
\]
Each \(C_K\) is closed.  We show that it has empty interior.

Let \(U\) be a nonempty basic open cylinder.  It fixes only finitely
many coordinates, say \(\omega_1,\ldots,\omega_N\).  Extend this finite
sequence by setting
\[
\omega_n=1
\qquad(n>N).
\]
For the resulting point,
\[
\Phi_\omega(t)
=
\sum_{n=1}^{N}\omega_n e^{-nt}
+
\frac{e^{-(N+1)t}}{1-e^{-t}},
\]
and therefore
\[
|\Phi_\omega(t)|\sim\frac1t
\qquad(t\downarrow0).
\]
Hence
\[
\int_0^1|\Phi_\omega(t)|^p\,dt=\infty
\qquad(p\geq1),
\]
so this point does not belong to \(A_p\), and in particular does not
belong to \(C_K\).  Thus no nonempty cylinder is contained in \(C_K\).
Each \(C_K\) is therefore nowhere dense, and \(A_p\), being their
countable union, is meagre.
\end{proof}

The proposition exhibits a sharp difference between probabilistic and
topological genericity.  For \(1\leq p<2\), a sequence chosen according
to Haar measure belongs to \(A_p\) almost surely, whereas a Baire-generic
sequence does not.

To formulate the simultaneous condition, let
\[
A_{<2}
=
\bigcap_{\substack{p\in\mathbb Q\\1\leq p<2}}A_p.
\]
Then
\[
\nu(A_{<2})=1,
\]
while \(A_{<2}\) is meagre.

\medskip
\noindent\textbf{Borel-hierarchy terminology.}
Recall that \(\boldsymbol{\Sigma}^0_1\) denotes the class of open sets
and \(\boldsymbol{\Pi}^0_1\) the class of closed sets.  The class
\(\boldsymbol{\Sigma}^0_2\) consists of countable unions of closed
sets (the \(F_\sigma\) sets), and \(\boldsymbol{\Pi}^0_3\) consists
of countable intersections of \(\boldsymbol{\Sigma}^0_2\) sets.
Since each \(A_p\) is an \(F_\sigma\) set and
\[
A_{<2}
=
\bigcap_{\substack{p\in\mathbb Q\\1\leq p<2}}A_p,
\]
it follows that \(A_{<2}\in\boldsymbol{\Pi}^0_3\).  Equivalently,
\(A_{<2}\) is a Borel set of class at most
\(\boldsymbol{\Pi}^0_3\).  The words ``at most'' indicate only an
upper bound on its Borel complexity; no optimality claim is made.  See
\cite[Chapter~II, especially \S11]{Kechris} for the Borel hierarchy.\footnote{Meagreness of \(A_{<2}\) follows because
\(A_{<2}\subseteq A_1\) (taking \(p=1\) in the intersection),
\(A_1\) is meagre by \cref{prop:measure-category}, and every subset of
a meagre set is meagre.  Moreover, the countable, rational-indexed
intersection \(A_{<2}\) coincides with the continuum-indexed condition
``\(\Phi_\omega\in L^p(0,\infty)\) for every real \(p\in[1,2)\)'' of
\cref{thm:laplace}(ii): if \(f\in L^{p_1}\cap L^{p_2}\) for rationals
\(p_1<p_2\) in \([1,2)\), splitting the domain of integration into
\(\{|f|\leq1\}\) and \(\{|f|>1\}\) shows \(f\in L^r\) for every real
\(r\in[p_1,p_2]\); hence membership in \(A_p\) for a dense set of
rationals approaching \(2\) already forces membership in \(A_p\) for
every real \(p\in[1,2)\).}

The Laplace criterion for the Möbius sequence can therefore be written
as
\[
\mathrm{RH}
\quad\Longleftrightarrow\quad
\mu\in A_{<2}.
\]
This is a membership problem for one distinguished arithmetic point in
a natural Borel set that is large in measure and small in category.
The descriptive-set-theoretic complexity of the set is not the source
of the difficulty: the arithmetic structure of the Möbius point is.

Thus it would be misleading to describe RH as ``Borel undecidable.''
Borel complexity concerns the definability of subsets of a Polish
space, whereas formal undecidability concerns provability in a fixed
axiomatic system.  The appropriate conclusion is that the smoothing
criterion places RH at the intersection of three notions of
genericity:
\[
\begin{array}{c|c}
\text{Haar measure} & A_{<2}\text{ is typical},\\
\text{Baire category} & A_{<2}\text{ is exceptional},\\
\text{arithmetic} & \mu\in A_{<2}\text{ is equivalent to RH}.
\end{array}
\]
This table is the precise counterpart of Denjoy's heuristic remark,
recalled in the introduction, that the Riemann hypothesis holds ``with
probability one.''  The M\"obius sequence \(\mu\) is a single,
non-random point of \(G^{\mathbb N}\); nothing in the argument assigns
a literal probability to the fixed proposition RH.  What the measure
row of the table does say, rigorously, is that if the M\"obius signs
were replaced by a Haar-random element of the same coefficient space,
the resulting Laplace transform would lie in \(A_{<2}\) almost surely
--- so that the property equivalent to RH is exactly the property
possessed by \emph{almost every} point of the space in which \(\mu\)
sits.  In that precise, comparative sense --- and only in that sense
--- Denjoy's old remark is made rigorous: RH is precisely the assertion that
the M\"obius sequence has a property which is measure-theoretically
typical in the ambient coefficient space, even though that same property
is topologically exceptional.

\section{A local moment-to-point-value estimate}
\label{sec:local-fourier}

The key estimate converts a local \(L^q\)-norm into control of the value at the center of the arc.

\begin{lemma}\label{lem:persistence}
Let
\[
 S_N(t)=\sum_{n=1}^N a_n\e^{2\pi i nt},
 \qquad |a_n|\leq1,
\]
and let \(A=|S_N(0)|\).  Then
\[
 |S_N'(t)|\leq \pi N(N+1)
\]
for every \(t\).  Consequently,
\[
 |S_N(t)|\geq\frac A2
 \qquad\text{whenever}\qquad
 |t|\leq \frac{A}{2\pi N(N+1)}.
\]
\end{lemma}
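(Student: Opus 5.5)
The plan is to obtain a Bernstein-type derivative bound by direct term-by-term differentiation, and then use the mean value inequality along the segment from \(0\) to \(t\). No earlier result of the paper is needed; the statement is elementary and concerns an arbitrary trigonometric polynomial with coefficients bounded by one.

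First, differentiating \(S_N\) term by term gives
\[
 S_N'(t)=\sum_{n=1}^N 2\pi i n\,a_n\e^{2\pi i nt}.
\]
By the triangle inequality and \(|a_n|\le1\), together with \(|\e^{2\pi i nt}|=1\),
\[
 |S_N'(t)|\le 2\pi\sum_{n=1}^N n=\pi N(N+1),
\]
uniformly in \(t\in\mathbb R\). This proves the first assertion. One could invoke Bernstein's inequality to get the sharper bound \(2\pi N\sup|S_N|\), but that is unnecessary here.

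Second, since \(S_N\) is complex-valued, I would not use the real mean value theorem directly. Instead I would write
\[
 S_N(t)-S_N(0)=\int_0^t S_N'(u)\,\dd u,
\]
which yields \(|S_N(t)-S_N(0)|\le \pi N(N+1)|t|\). By the reverse triangle inequality,
\[
 |S_N(t)|\ge A-\pi N(N+1)|t|.
\]
If \(|t|\le A/(2\pi N(N+1))\), the subtracted term is at most \(A/2\), so \(|S_N(t)|\ge A/2\).

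There is no genuine obstacle. The only points that need care are using the integral form of the mean value inequality, because \(S_N\) is complex-valued, and noting that the case \(A=0\) is trivial.
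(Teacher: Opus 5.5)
Your proof is correct and follows the paper's argument essentially verbatim. Term-by-term differentiation with the triangle inequality gives $|S_N'(t)|\le 2\pi\sum_{n\le N} n=\pi N(N+1)$. The mean value estimate $|S_N(t)-S_N(0)|\le \pi N(N+1)|t|$ and the reverse triangle inequality then give the persistence bound. Your integral form $S_N(t)-S_N(0)=\int_0^t S_N'(u)\,du$ justifies the mean value step for a complex-valued function a little more carefully than the paper, which leaves that step implicit.
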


\begin{proof}
Differentiation gives
\[
 S_N'(t)=2\pi i\sum_{n=1}^N n a_n\e^{2\pi i nt},
\]
and hence
\[
 |S_N'(t)|
 \leq2\pi\sum_{n=1}^N n
 =\pi N(N+1).
\]
The second assertion follows from the mean value estimate
\[
 |S_N(t)-S_N(0)|
 \leq \pi N(N+1)|t|,
\]
which gives \(|S_N(t)|\geq A-\pi N(N+1)|t|\), and the right-hand side
is \(\geq A/2\) precisely when \(|t|\leq A/(2\pi N(N+1))\).  (When
\(A=0\) the stated conclusion holds trivially, the hypothesis
\(|t|\leq0\) forcing \(t=0\).)
\end{proof}

For \(r>0\), define the normalized local \(L^q\)-mean
\begin{equation}\label{eq:Bq-def}
 B_q(S_N;r)
 =
 \left(
 \frac1{2r}\int_{-r}^{r}|S_N(t)|^q\,\dd t
 \right)^{1/q}.
\end{equation}

\begin{proposition}[Local moment-to-point-value inequality]
\label{prop:moment-point}
For every \(q\geq1\), \(N\geq1\), and \(r>0\),
\begin{equation}\label{eq:moment-point}
 |S_N(0)|
 \leq
 \max\left\{
 2B_q(S_N;r),
 \,
 C_q\bigl(N^2r\bigr)^{1/(q+1)}
 B_q(S_N;r)^{q/(q+1)}
 \right\},
\end{equation}
where
\[
 C_q:=\bigl(2^{q+2}\pi\bigr)^{1/(q+1)}
\]
is a constant depending only on \(q\) (not on \(N\) or \(r\)).
\end{proposition}

\begin{proof}
Write \(A=|S_N(0)|\) and \(L=\pi N(N+1)\).

Suppose first that \(A/(2L)\geq r\).  By
\cref{lem:persistence}, \(|S_N(t)|\geq A/2\) throughout
\([-r,r]\).  Therefore
\[
 B_q(S_N;r)\geq\frac A2,
\]
which gives the first term in \eqref{eq:moment-point}.

Suppose now that \(A/(2L)<r\).  Again by
\cref{lem:persistence},
\[
 |S_N(t)|\geq\frac A2
 \qquad\text{for }|t|\leq\frac{A}{2L}.
\]
The interval on the right has length \(A/L\), and therefore
\[
 B_q(S_N;r)^q
 \geq
 \frac1{2r}\frac{A}{L}\left(\frac A2\right)^q.
\]
It follows that
\[
 A^{q+1}
 \leq 2^{q+1}Lr\,B_q(S_N;r)^q.
\]
Now write \(L=\pi N(N+1)=\pi N^2(1+N^{-1})\), so that
\[
 A^{q+1}
 \leq
 \bigl(2^{q+1}\pi(1+N^{-1})\bigr)\,(N^2r)\,B_q(S_N;r)^q,
\]
and hence
\[
 A
 \leq
 \bigl(2^{q+1}\pi(1+N^{-1})\bigr)^{1/(q+1)}
 (N^2r)^{1/(q+1)}
 B_q(S_N;r)^{q/(q+1)}.
\]
Since \(N\geq1\) gives \(1+N^{-1}\leq2\), the prefactor
\(\bigl(2^{q+1}\pi(1+N^{-1})\bigr)^{1/(q+1)}\) is bounded by
\(\bigl(2^{q+2}\pi\bigr)^{1/(q+1)}=C_q\), a quantity that no longer
depends on \(N\).  Replacing the (possibly smaller) \(N\)-dependent
prefactor by the larger constant \(C_q\) only weakens the inequality,
so it remains valid, proving the second term of
\eqref{eq:moment-point} with the constant \(C_q\) fixed in the
statement of the proposition.
\end{proof}

Applying the proposition to the Möbius polynomial
\[
 S_N(t)=\sum_{n\leq N}\mu(n)\e^{2\pi i nt},
 \qquad\text{so that}\qquad
 P_N(t)=\frac1{\sqrt N}S_N(t),
\]
gives the quantitative estimate that will imply the converse direction
of \cref{thm:fourier}.  We first record the elementary identity linking
the two normalizations \(B_q\) and \(\Mcal_{q,c}\): comparing
definitions \eqref{eq:Bq-def} and \eqref{eq:localmoment} at \(r=c/N\),
\[
 B_q(S_N;c/N)^q
 =
 \frac{N}{2c}\int_{-c/N}^{c/N}|S_N(t)|^q\,\dd t
 =
 \frac{N}{2c}\int_{-c/N}^{c/N}\bigl(\sqrt N|P_N(t)|\bigr)^q\,\dd t
 =
 N^{q/2}\,\Mcal_{q,c}(N)^q,
\]
that is,
\begin{equation}\label{eq:BqMq}
 B_q(S_N;c/N)=\sqrt N\,\Mcal_{q,c}(N).
\end{equation}

\begin{corollary}\label{cor:Mertens-local}
For every \(q\geq1\), \(c>0\), and \(N\geq1\),
\begin{equation}\label{eq:Mertens-local}
 |M(N)|
 \leq C_{q,c}
 \max\left\{
 \sqrt N\,\Mcal_{q,c}(N),
 \,
 N^{\frac12+\frac{1}{2(q+1)}}
 \Mcal_{q,c}(N)^{\frac{q}{q+1}}
 \right\}.
\end{equation}
\end{corollary}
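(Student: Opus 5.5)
The plan is to specialize \cref{prop:moment-point} to the M\"obius polynomial \(S_N(t)=\sum_{n\leq N}\mu(n)\e^{2\pi i nt}\) with the radius \(r=c/N\), and then rewrite everything in terms of \(\Mcal_{q,c}(N)\) via the identity \eqref{eq:BqMq}. The hypothesis \(|a_n|\leq1\) of \cref{lem:persistence} holds because \(|\mu(n)|\leq1\). Moreover, \(S_N(0)=\sum_{n\leq N}\mu(n)=M(N)\), so the left-hand side of \eqref{eq:moment-point} is exactly \(|M(N)|\).

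Substituting \(r=c/N\) gives \(N^2r=cN\). By \eqref{eq:BqMq}, \(B_q(S_N;c/N)=\sqrt N\,\Mcal_{q,c}(N)\). The first term in the maximum then becomes \(2\sqrt N\,\Mcal_{q,c}(N)\). The second term becomes
\[
C_q(cN)^{1/(q+1)}\bigl(\sqrt N\,\Mcal_{q,c}(N)\bigr)^{q/(q+1)}
=C_q\,c^{1/(q+1)}\,N^{\frac{1}{q+1}+\frac{q}{2(q+1)}}\,\Mcal_{q,c}(N)^{\frac{q}{q+1}}.
\]
The only real computation is the exponent of \(N\):
\[
\frac{1}{q+1}+\frac{q}{2(q+1)}=\frac{q+2}{2(q+1)}=\frac12+\frac{1}{2(q+1)},
\]
which is exactly the exponent appearing in \eqref{eq:Mertens-local}.

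Finally, set \(C_{q,c}=\max\{2,\;C_q c^{1/(q+1)}\}\). Each term of the maximum is then bounded by \(C_{q,c}\) times the corresponding term in \eqref{eq:Mertens-local}, so the maximum is too. There is no genuine obstacle here: this is bookkeeping on top of \cref{prop:moment-point}. The only care needed is to check the exponent arithmetic and that the constant depends only on \(q\) and \(c\), not on \(N\). That holds because \(C_q\) was already made independent of \(N\) in \cref{prop:moment-point}, using \(1+N^{-1}\leq2\).
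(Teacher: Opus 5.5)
Your proposal is correct and follows the paper's proof essentially verbatim: you specialize \cref{prop:moment-point} to the M\"obius polynomial at $r=c/N$, convert via \eqref{eq:BqMq}, check the exponent $\frac{1}{q+1}+\frac{q}{2(q+1)}=\frac12+\frac{1}{2(q+1)}$, and take $C_{q,c}=\max\{2,C_q c^{1/(q+1)}\}$, exactly as the paper does. Your explicit remark that $|\mu(n)|\leq1$ meets the hypothesis of \cref{lem:persistence} is a small point the paper leaves implicit.
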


\begin{proof}
Take \(r=c/N\) in \cref{prop:moment-point}, applied to the Möbius
polynomial \(S_N\), and use identity \eqref{eq:BqMq}.  The first term
of \eqref{eq:moment-point} becomes \(2\sqrt N\,\Mcal_{q,c}(N)\), while
the second becomes
\[
 C_q\,(N^2c/N)^{1/(q+1)}
 \bigl(\sqrt N\,\Mcal_{q,c}(N)\bigr)^{q/(q+1)}
 =
 C_q\,c^{1/(q+1)}
 N^{\frac12+\frac{1}{2(q+1)}}
 \Mcal_{q,c}(N)^{q/(q+1)}.
\]
Since \(S_N(0)=M(N)\), taking
\(C_{q,c}:=\max\{2,\,C_q\,c^{1/(q+1)}\}\) --- which majorizes both the
factor \(2\) in the first term and the factor \(C_q\,c^{1/(q+1)}\) in
the second --- gives \eqref{eq:Mertens-local}.
\end{proof}

\begin{remark}
For one fixed exponent \(q\), a subpolynomial bound for
\(\Mcal_{q,c}(N)\) yields
\[
 M(N)=O_\varepsilon\left(
 N^{1/2+1/(2(q+1))+\varepsilon}
 \right).
\]
The loss \(1/(2(q+1))\) tends to zero as \(q\to\infty\).
This explains why arbitrarily high finite moments, rather than one
fixed moment, recover RH.
\end{remark}

\section{Proof of the local Fourier criterion}
\label{sec:fourier-proof}

The theorem follows by applying this estimate to the normalized Möbius polynomial.

\begin{proof}[Proof of \cref{thm:fourier}]
Assume first that RH holds.  Then for every \(\delta>0\),
\[
 M(x)=O_\delta(x^{1/2+\delta}).
\]
Abel summation (partial summation) gives, for \(t\in\mathbb R\), the
exact identity
\begin{equation}\label{eq:Abel}
 S_N(t)
 =
 M(N)\e^{2\pi iNt}
 -
 2\pi it\int_1^N M(x)\e^{2\pi ixt}\,\dd x.
\end{equation}
Indeed, with \(f(x)=\e^{2\pi ixt}\) and the convention \(M(x)=0\) for
\(x<1\), the general partial summation formula
\[
 \sum_{y<n\leq N}\mu(n)f(n)
 =
 M(N)f(N)-M(y)f(y)-\int_y^N M(x)f'(x)\,\dd x
\]
applied with any \(y\in[0,1)\) gives \(M(y)=0\) and
\(\sum_{y<n\leq N}\mu(n)f(n)=\sum_{n\leq N}\mu(n)f(n)=S_N(t)\), so it
reduces exactly to \eqref{eq:Abel} with no residual boundary term.
Consequently,
\[
 |S_N(t)|
 \ll_\delta
 N^{1/2+\delta}
 +
 |t|\int_1^N x^{1/2+\delta}\,\dd x
 \ll_\delta
 N^{1/2+\delta}(1+N|t|).
\]
For \(|t|\leq c/N\), this gives
\[
 |P_N(t)|\ll_{\delta,c}N^\delta.
\]
Therefore
\[
 \Mcal_{q,c}(N)\ll_{\delta,q,c}N^\delta
\]
for every finite \(q\geq1\).  This proves
\textup{(i)}\(\Rightarrow\)\textup{(ii)}, and
\textup{(ii)}\(\Rightarrow\)\textup{(iii)} is immediate.

Assume now \textup{(iii)}, in the precise form of \cref{rem:iii-precise}:
for every \(\eta>0\) there is an unbounded set \(Q_\eta\subseteq[1,\infty)\)
such that \(\Mcal_{q,c}(N)=O_{\eta,q,c}(N^\eta)\) for every
\(q\in Q_\eta\).

Let \(\varepsilon>0\).  The two choices below must be made in this
order, since the set of admissible exponents \(Q_\eta\) depends on
\(\eta\).  First choose \(\eta>0\) so small that
\[
 \eta<\frac{\varepsilon}{3}.
\]
By hypothesis \(Q_\eta\) is unbounded, so we may then choose
\(q\in Q_\eta\) large enough that
\[
 \frac1{2(q+1)}<\frac{\varepsilon}{3}.
\]
Since \(q\in Q_\eta\), the hypothesis gives
\[
 \Mcal_{q,c}(N)=O_{\eta,q,c}(N^\eta).
\]
The first term in \eqref{eq:Mertens-local} is
\[
 O(N^{1/2+\eta}),
\]
while the second is
\[
 O\left(
 N^{1/2+1/(2(q+1))+\eta q/(q+1)}
 \right)
 =
 O(N^{1/2+\varepsilon}).
\]
Thus
\[
 M(N)=O_\varepsilon(N^{1/2+\varepsilon}).
\]
Since this holds for every \(\varepsilon>0\), the classical Mertens
criterion implies RH.
\end{proof}

\section{The critical scale \texorpdfstring{\(N^{-1}\)}{N to the minus one}}
The scale \(N^{-1}\) is distinguished for two related reasons.
The general background on trigonometric polynomials and their local
behavior may be found in Zygmund \cite{Zygmund}.

First, it is the smallest period among the Fourier modes occurring in \(P_N\).
For the individual mode
\[
  \e^{2\pi i n t},
\]
the period in the variable \(t\) is \(1/n\), because
\[
  \e^{2\pi i n(t+1/n)}=\e^{2\pi i nt}.
\]
Thus the highest-frequency mode, corresponding to \(n=N\), has the
smallest period \(1/N\).  Consequently, when \(t\) varies by an amount
comparable with \(1/N\), the highest-frequency terms can change by an
amount of order one.  In particular, if \(t=c/N\) and \(n\) is comparable
with \(N\), then the phase \(2\pi nt\) is of order one.  Hence the random
variable \(P_N(U_{N,c})\) samples a genuinely varying trigonometric
polynomial rather than an almost constant perturbation of \(P_N(0)\).
By contrast, on intervals satisfying \(|t|\ll 1/N\), all the phases
\(2\pi nt\), \(n\leq N\), are small and the polynomial varies very little.

Second, RH itself controls the polynomial on this scale.  The estimate
deduced from \eqref{eq:Abel} is
\[
 |P_N(t)|\ll_\varepsilon N^\varepsilon(1+N|t|).
\]
Hence RH immediately gives subpolynomial control on arcs satisfying
\[
 r_N\leq N^{-1+o(1)}.
\]

This yields a modest generalization.

\begin{corollary}\label{cor:variable-scale}
Let \(r_N>0\) satisfy
\[
 Nr_N=N^{o(1)}.
\]
If RH holds, then, for every finite \(q\geq1\) and every
\(\varepsilon>0\),
\[
 \left(
 \frac1{2r_N}\int_{-r_N}^{r_N}|P_N(t)|^q\,\dd t
 \right)^{1/q}
 =O_{\varepsilon,q}(N^\varepsilon).
\]
\end{corollary}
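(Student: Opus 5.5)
The plan is to reuse the pointwise bound that the proof of \cref{thm:fourier} derives from the Abel summation identity \eqref{eq:Abel}, and to push it through the average over the arc \([-r_N,r_N]\). Fix \(q\geq1\) and \(\varepsilon>0\), and assume RH. By \eqref{eq:mertens-rh}, for every \(\delta>0\) we have \(M(x)=O_\delta(x^{1/2+\delta})\). Inserting this into \eqref{eq:Abel} gives, exactly as before,
\[
 |P_N(t)|\ll_\delta N^{\delta}\bigl(1+N|t|\bigr),
 \qquad t\in\mathbb R,
\]
with an implied constant that depends only on \(\delta\), and not on \(t\) or \(N\).

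Next restrict to the arc \(|t|\leq r_N\). There the bound becomes \(|P_N(t)|\ll_\delta N^\delta(1+Nr_N)\). It is uniform in \(t\), so the normalized local \(L^q\)-mean is at most the supremum over the arc:
\[
 \left(\frac1{2r_N}\int_{-r_N}^{r_N}|P_N(t)|^q\,\dd t\right)^{1/q}
 \leq\sup_{|t|\leq r_N}|P_N(t)|
 \ll_\delta N^\delta(1+Nr_N).
\]
This step is immediate, because the average of \(|P_N|^q\) over the arc cannot exceed the maximum of \(|P_N|^q\) there. As a consequence the bound does not depend on \(q\) at all.

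The only point that needs care is turning the hypothesis \(Nr_N=N^{o(1)}\) into a concrete estimate. The hypothesis means that \(\log(Nr_N)/\log N\to0\). So for the fixed \(\varepsilon\) there is an \(N_0(\varepsilon)\) with \(Nr_N\leq N^{\varepsilon/2}\) for all \(N\geq N_0\). Now take \(\delta=\varepsilon/2\). For \(N\geq N_0\) the right-hand side above is \(\ll_\varepsilon N^{\varepsilon}\).

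The finitely many \(N<N_0\) still have to be handled. For each such \(N\), \(|P_N|\leq\sqrt N\) trivially, so every local mean is at most \(\sqrt{N_0}\). This can be absorbed into the implied constant, which then depends on \(\varepsilon\) and on the given sequence \((r_N)\) through \(N_0\). That dependence on \((r_N)\) is implicit in the statement.

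I expect no real obstacle. The one subtlety is bookkeeping: the \(o(1)\) has to be converted into a uniform exponent \(\varepsilon/2\) beyond some threshold before \(\delta\) is chosen, and it should be noted that the implied constant is in fact independent of \(q\).
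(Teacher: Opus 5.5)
Your proposal is correct and is essentially the paper's argument: the RH bound \(|P_N(t)|\ll_\delta N^\delta(1+N|t|)\) from \eqref{eq:Abel}, restricted to \(|t|\leq r_N\), followed by choosing \(\delta\) of size about \(\varepsilon/2\) against the \(o(1)\) in \(Nr_N=N^{o(1)}\). Two points are slightly more careful than the paper's ``for all sufficiently large \(N\)'': your explicit treatment of the finitely many \(N<N_0\) via \(|P_N|\leq\sqrt N\), and your observation that the bound is uniform in \(q\). The substance is the same.
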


\begin{proof}
The estimate
\[
 |S_N(t)|\ll_\delta N^{1/2+\delta}(1+N|t|)
\]
established in the proof of \cref{thm:fourier} (via \eqref{eq:Abel}
and the Mertens bound under RH) holds for \emph{every} real \(t\), not
merely for \(|t|\leq c/N\).  Dividing by \(\sqrt N\) gives
\[
 |P_N(t)|\ll_\delta N^\delta(1+N|t|)
 \qquad(t\in\mathbb R),
\]
so that, for \(|t|\leq r_N\),
\[
 |P_N(t)|\ll_\delta N^\delta(1+Nr_N).
\]
Raising to the \(q\)-th power and averaging over \((-r_N,r_N)\) gives
\[
 \left(
 \frac1{2r_N}\int_{-r_N}^{r_N}|P_N(t)|^q\,\dd t
 \right)^{1/q}
 \ll_{\delta,q}
 N^\delta(1+Nr_N).
\]
By hypothesis \(Nr_N=N^{o(1)}\), so the right-hand side is
\(N^{\delta+o(1)}\).  Given \(\varepsilon>0\), choose
\(\delta<\varepsilon/2\); then for all sufficiently large \(N\) the
right-hand side is \(\leq N^{\varepsilon}\), which proves the claim.
\end{proof}

\begin{remark}[On the maximal shrinking scale]
It is natural to ask how large the shrinking radius \(r_N\) may be while
subpolynomial local moment bounds still follow from RH alone.  The
calculation above shows that
\[
 r_N=N^{-1+o(1)}
\]
is the largest scale obtained directly from the classical RH estimate
for \(M(x)\).  This does not prove that larger arcs cannot satisfy such
estimates.  Bounds on larger arcs would require additional cancellation
in additively twisted Möbius sums and may therefore be strictly stronger
than RH.  Uniform estimates for exponential sums with M\"obius coefficients
go back to Davenport \cite{Davenport}.  Subsequent refinements and
investigations of their maximal size include the work of Hajela and
Smith \cite{HajelaSmith}, Baker and Harman \cite{BakerHarman}, and
Maier and Sankaranarayanan \cite{MaierSankaranarayanan}.
\end{remark}

\begin{question}\label{q:max-scale}
Determine the largest sequence \(r_N\to0\) for which RH alone implies
\[
 \left(
 \frac1{2r_N}\int_{-r_N}^{r_N}|P_N(t)|^q\,\dd t
 \right)^{1/q}
 =N^{o(1)}
\]
for every finite \(q\).  Is \(r_N=N^{-1+o(1)}\) optimal under RH
alone?
\end{question}

\section{Tail and Orlicz formulations}
The moment criterion admits equivalent-looking probabilistic
variations.  Let
\[
 X_{N,c}=P_N(U_{N,c}).
\]
If, for arbitrarily large \(q\),
\[
 \Pp(|X_{N,c}|>\lambda)
 \leq C_{q,\eta,c}N^{q\eta}\lambda^{-q},
 \qquad \lambda>0,
\]
together with a mild truncation or integrability condition ensuring
the corresponding \(q\)-th moment bound, then
\cref{thm:fourier} implies RH.

A stronger Kahane-type condition is uniform subgaussianity modulo a
subpolynomial factor:
\begin{equation}\label{eq:subgaussian}
 \E\exp\left(
 \frac{|X_{N,c}|^2}{C_\eta N^{2\eta}}
 \right)
 \leq C_\eta
 \qquad\text{for every }\eta>0.
\end{equation}
Indeed, \eqref{eq:subgaussian} implies
\[
 \|X_{N,c}\|_{L^q}
 \ll_\eta \sqrt q\,N^\eta
\]
for every finite \(q\), and therefore implies RH by
\cref{thm:fourier}.  Condition \eqref{eq:subgaussian} is stronger than
the moment criterion and should be viewed as a deterministic Möbius
analogue of the behavior of random-sign Fourier polynomials.

\section{Flatness and related polynomial questions}
\label{sec:flatness}

A trigonometric polynomial
\[
 Q_N(t)=\sum_{n=0}^{N}a_n\e^{2\pi int}
\]
is called \emph{flat} when its modulus is nearly constant on the
circle after a suitable normalization.  For example, if
\(\|Q_N\|_2=1\), a sequence \((Q_N)\) is uniformly flat when
\[
 \sup_{t\in\T}\bigl||Q_N(t)|-1\bigr|\longrightarrow0.
\]
One may also speak of \(L^p\)-flatness when
\[
 \bigl\||Q_N|-1\bigr\|_{L^p(\T)}\longrightarrow0.
\]

A weaker notion is \emph{\(L^p\)-semiflatness}.  This means only that
the \(L^p\)-norms remain uniformly bounded:
\[
 \sup_N\|Q_N\|_{L^p(\T)}<\infty.
\]
Thus flatness says that the modulus becomes almost constant, whereas
semiflatness merely prevents the polynomials from developing
excessively large peaks in the \(L^p\)-average sense.

For the normalized Möbius polynomials
\[
 P_N(t)=\frac1{\sqrt N}\sum_{n\leq N}\mu(n)\e^{2\pi int},
\]
one has
\[
 \|P_N\|_2^2
 =
 \frac1N\sum_{n\leq N}\mu(n)^2
 \longrightarrow\frac6{\pi^2}.
\]
Hence their global \(L^2\)-size is automatically controlled.  The
nontrivial questions concern \(L^q\)-norms with \(q>2\), or the local
moments considered in this paper.

\label{sec:erdos-flat}

A particularly important class consists of the \emph{Littlewood
polynomials}
\[
 L_N(z)=\sum_{n=0}^{N}a_nz^n,
 \qquad a_n\in\{-1,1\}.
\]
By Parseval's identity,
\[
 \frac1{2\pi}\int_0^{2\pi}
 |L_N(\e^{it})|^2\,\dd t=N+1,
\]
so their natural size on the unit circle is \(\sqrt{N+1}\).

Erd\H{o}s asked whether one can choose the signs \(a_n\) so that the
modulus of \(L_N\) remains comparable with this natural size at every
point of the unit circle.  More precisely, the problem is whether
there exist absolute constants \(0<\delta<\Delta<\infty\) such that,
for every sufficiently large \(N\), one can find a Littlewood
polynomial satisfying
\begin{equation}\label{eq:erdos-littlewood-flat}
 \delta\sqrt{N+1}
 \leq |L_N(z)|
 \leq \Delta\sqrt{N+1},
 \qquad |z|=1.
\end{equation}
This problem was recorded by Erd\H{o}s in his 1957 collection of
unsolved problems \cite{Erdos1957}.  Littlewood later conjectured that
such two-sided flat Littlewood polynomials do exist.

The problem was solved affirmatively by Balister, Bollob\'as, Morris,
Sahasrabudhe, and Tiba \cite{BalisterEtAl}: absolute constants
\(\delta,\Delta>0\) satisfying \eqref{eq:erdos-littlewood-flat} exist.
Thus Littlewood polynomials can be uniformly flat up to fixed
multiplicative constants.

This should be distinguished from \emph{ultraflatness}.  A sequence
of \(L^2\)-normalized polynomials is ultraflat if
\[
 \sup_{|z|=1}
 \left|
 \frac{|L_N(z)|}{\sqrt{N+1}}-1
 \right|
 \longrightarrow0.
\]
The theorem of Balister et al.\ gives bounded distortion between two
fixed positive constants, but not convergence of the normalized
modulus to \(1\).  For polynomials with arbitrary unimodular complex
coefficients, Kahane constructed ultraflat sequences; the coefficients
in such constructions need not be restricted to \(\{-1,1\}\).

The Möbius polynomials
\[
 P_N(t)=\frac1{\sqrt N}
 \sum_{n\leq N}\mu(n)\e^{2\pi int}
\]
are not Littlewood polynomials because \(\mu(n)\) may vanish.  They
nevertheless belong to the same broad family of polynomials with
highly restricted coefficients.  The question studied in this note
is also different from Erd\H{o}s--Littlewood flatness: we do not ask
for a uniform two-sided estimate on the whole circle.  We ask instead
for subpolynomial control of high moments on a shrinking arc centered
at the distinguished point \(t=0\).  The common theme is the extent
to which arithmetic or combinatorial coefficients can prevent large
peaks and deep troughs of a Fourier polynomial.

For normalized Möbius polynomials on the whole circle, one has
\[
 \|P_N\|_2^2
 =
 \frac1N\sum_{n\leq N}\mu(n)^2
 \longrightarrow\frac6{\pi^2}.
\]
Consequently, global \(L^q\)-bounds for \(q<2\) are automatic.  Global
bounds for \(q>2\) are much more restrictive.  Work on
\(L^p\)-semiflatness shows that sufficiently strong bounds for
arbitrarily large exponents imply RH; see el Abdalaoui
\cite{Abdalaoui}.

The local criterion differs in two respects.  It uses normalized
Lebesgue measure on a shrinking arc rather than Haar measure on the
whole circle, and it yields a direct quantitative estimate for the
Mertens function through \cref{eq:Mertens-local}.  The critical arc
contains the distinguished point \(t=0\), but that point itself has
probability zero.  The recovery mechanism is the persistence of a
large point value imposed by the bandwidth of the polynomial.

\section{Comparison of the two smoothing criteria}

The two principal equivalences can be written compactly as
\[
\mathrm{RH}
\quad\Longleftrightarrow\quad
\Phi\in\bigcap_{1\le p<2}L^p(0,\infty),
\]
and, for each fixed \(c>0\),
\[
\mathrm{RH}
\quad\Longleftrightarrow\quad
\Mcal_{q,c}(N)=N^{o(1)}
\quad\text{for arbitrarily large finite }q.
\]

The first condition is global on \((0,\infty)\), but its arithmetic
content is concentrated at the endpoint \(t=0\).  The second is local
on the circle and centers the averaging interval at the point
\(t=0\), where the Fourier polynomial equals \(M(N)/\sqrt N\).
In both cases the threshold is governed by square-root cancellation.

Thus the two criteria are different analytic expressions of the same
arithmetic phenomenon.  In the independent model, the Laplace
\(L^{2-\varepsilon}\) condition is almost sure but, for
\(1\le p<2\), not topologically robust: the corresponding set has
empty interior in \(G^{\mathbb N}\).  For the M\"obius sequence the
same integrability condition becomes equivalent to RH.  Smoothing makes the cancellation
visible in a function-space norm, but does not by itself produce it.

\section{Historical and expository context}

For a broad account of equivalent formulations of RH, Broughan's
volumes \cite{BroughanI,BroughanII,BroughanIII} are especially useful.
Volume~II has a strong analytic orientation and includes, among many
other topics, the Nyman--Beurling circle of ideas, Weil's explicit
formula, discrete measures, and horocycle criteria.  Chapter~10 treats
the author's 1994 paper \cite{VerjovskyDiscrete} in detail and places
it next to the criterion of Zagier and its dynamical extensions.  This
background is close to the point of view of the present paper because
Mellin inversion is central in both settings.

The Laplace criterion belongs to the classical family of real-variable
and transform-theoretic reformulations of RH.  B\'aez-Duarte
\cite{BaezDuarte} relates zero-free half-planes to \(L^p\)-properties
of the transformed Mertens function \(M(1/t)\), in the setting of
Nyman--Beurling-type criteria.  The present formulation instead uses
the exponentially smoothed sum \(\Phi(t)\).

The same discrete Laplace transform has recently been studied by
Liflandsky \cite{Liflandsky}.  That work gives, under an assumption on
the simplicity of the zeros, an explicit formula involving the zeros
of \(\zeta\), and discusses pointwise estimates as criteria for RH.
The argument in this article is independent of such an explicit
formula and uses only Abel summation, H\"older's inequality, and the
Mellin identity \eqref{eq:Mellin-identity}.

A related, and considerably deeper, illustration of what RH buys for
moments is Harper's sharp conditional bound
\(\int_T^{2T}|\zeta(1/2+it)|^{2k}\,dt\ll_k T\log^{k^2}T\) for every
fixed \(k\geq0\), obtained assuming RH by working directly with the
moments rather than passing through bounds for large values
\cite{HarperConditionalMoments}.  This is a related moment-theoretic manifestation of the same underlying
question --- how much RH controls the size of \(\zeta\) --- and is
considerably more precise than anything the present elementary methods can
reach; it is recorded here as a pointer toward the depth of the moment
problem on the critical line itself, which lies beyond the scope of the
smoothing criteria developed below.

We have not found the exact equivalence of \cref{thm:fourier} stated in
this form in the sources consulted.  Its proof is short and follows
standard principles, so we make no priority claim for the equivalence
itself; it may well be folklore.  Recording it here serves both an
expository and a structural purpose: it gives a transparent bridge
among M\"obius cancellation, local Fourier smoothing, high moments, and
the classical Mertens criterion.  The dynamical semigroup formulation
and the measure--category comparison developed elsewhere in the paper
provide additional viewpoints that are not part of this folklore
observation.

\begin{remark}[Relation to the author's earlier note]
The local Fourier-moment criterion of \cref{thm:fourier}, together with
the supporting persistence and moment-to-point-value estimates of
\cref{sec:local-fourier}, first appeared in a more narrowly focused
note by the author, \emph{Local Moments of M\"obius Fourier Polynomials
and the Riemann Hypothesis} (arXiv:2607.25002).  The present article
places that criterion alongside the Laplace \(L^p\)-criterion of
\cref{thm:laplace}, the probabilistic model of
\cref{sec:probabilistic-form}, the multiplicative semigroup action, and
the measure--category comparison within a single expository framework.
These additional strands go beyond the scope of the earlier note.
\end{remark}

\section{Concluding remarks}

The Mertens criterion admits at least two simple smoothing
interpretations.  Exponential smoothing leads, through the Mellin
transform, to an \(L^p\)-integrability threshold at \(p=2\).  Local
Fourier smoothing leads to high-moment bounds on arcs whose natural
scale is \(1/N\).  Both formulations are elementary once the classical
criterion \eqref{eq:mertens-rh} is available.

Together they give two complementary analytic pictures of M\"obius
cancellation: Mellin integrability on the half-line and local moments of
trigonometric polynomials on the circle.  A third picture is dynamical.
The multiplicative semigroup \(\mathbb N_+\) acts on the coefficient
space by restriction to multiples, the M\"obius sequence is a
 distinguished arithmetic point of this action, and RH is equivalent to
membership of that point in the natural set
\[
A_{<2}=\bigcap_{1\leq p<2}
\{\omega:\Phi_\omega\in L^p(0,\infty)\}.
\]
The fact that this set has full Haar measure while being meagre gives a
concrete meeting point of probability, topology, arithmetic, and
dynamics.  This semigroup formulation is different from the dynamical
systems connection furnished by Sarnak's conjecture.  At the same time,
it belongs naturally to the broader Furstenberg philosophy of translating
arithmetic structure into dynamics: multiplication of indices becomes a
commuting semigroup action, and RH is expressed as a property of one
distinguished arithmetic point of that action.

The survey material of \cref{sec:modern-randomness} also shows why the
old phrase ``M\"obius behaves like a random sequence of signs'' should
be used with care.  Independent signs explain the square-root scale,
but multiplicativity creates correlations that lead to subtler moment
estimates and limiting laws; for the deterministic M\"obius and
Liouville functions, modern short-interval and uniformity results give a
different and increasingly precise meaning to pseudorandomness.

Thus the article has two purposes.  It surveys a line of ideas from
Denjoy and Wintner to recent work on random multiplicative functions,
and it records several exact reformulations and dynamical observations
that arise naturally from the same question.  These observations are
modest, but they are part of the mathematical content of the paper and
not merely devices of exposition.

Questions beyond the results proved here include the endpoint \(p=2\),
the maximal shrinking scale for local Fourier moments, and comparisons
with pointwise Laplace estimates, global semiflatness, random
multiplicative models, and other dynamical notions of M\"obius
randomness.

\section*{Acknowledgments}

The author would like to acknowledge Proyecto PAPIIT IN103324
(DGAPA, UNAM, M\'exico) for its financial support.  The author also
acknowledges the use of ChatGPT (OpenAI) and Claude (Anthropic) for
proofreading, bibliographic searches, and refinement of the exposition.
The mathematical responsibility for the manuscript remains entirely
with the author.

\bigskip
\noindent
\textsc{Alberto Verjovsky}\\
Instituto de Matem\'aticas\\
Universidad Nacional Aut\'onoma de M\'exico\\
Apartado Postal 273, Administraci\'on de Correos No.~3\\
C.P.~62251 Cuernavaca, Morelos, M\'exico\\
\texttt{albertoverjovsky@gmail.com}

\end{document}